\documentclass[12pt]{amsart}
\usepackage{amscd,amssymb,mathrsfs}
\usepackage{graphics}
\usepackage{psfrag}
\usepackage[dvipdf]{graphicx}
\newtheorem*{thmmain}{Main Theorem}
\newtheorem*{thm*}{Theorem}
\newtheorem{thm}{Theorem}[section]
\newtheorem{lemma}[thm]{Lemma}

\newtheorem{defn}[thm]{Definition}

\typeout{theoremstyles defined}

\renewcommand{\bar}[1]{#1^{-1}}

\newcommand{\R}{\mathbb R}
\newcommand{\C}{\mathbb C}
\renewcommand{\o}{\mathsf}
\newcommand{\opn}{\operatorname}
\newcommand{\tr}{\mathsf{tr}}
\newcommand{\Inn}{\mathsf{Inn}}

\newcommand{\Fix}{\mathsf{Fix}}

\newcommand{\Ta}{\mathsf{Tame}}   
\renewcommand{\Pr}{\mathsf{Proper}} 
\newcommand{\Dd}{\mathsf{FourSided}}   

\newcommand{\lA}{\ell_A} 
\newcommand{\lB}{\ell_B} 
\newcommand{\lX}{\ell_X} 
\newcommand{\lY}{\ell_Y} 

\newcommand{\topsurf}{\Sigma} 
\newcommand{\surf}{S} 

\newcommand{\oto}{\o{O}(2,1)}
\newcommand{\soto}{{\o{SO}(2,1)^0}}
\newcommand{\SOTO}{{\o{SO}(2,1)}}

\newcommand{\rto}{\mathsf{V}}

\newcommand{\Eto}{\mathsf{E}} 
\newcommand{\hyp}{{\mathsf H}^2}


\newcommand{\LL}{\mathsf L} 
\newcommand{\Z}{\mathbb Z}

\newcommand{\Q}{\mathscr Q}
\newcommand{\idealquad}{\mathcal Q}


\newcommand{\CP}{{\mathcal C}}

\renewcommand{\H}{\mathcal H} 

\newcommand{\vx}{{\mathsf x}}

\newcommand{\vu}{{\mathsf u}}
\newcommand{\vv}{{\mathsf v}}


\newcommand{\xo}[1]{#1^0}
\newcommand{\xp}[1]{#1^+}
\newcommand{\xm}[1]{#1^-}
\newcommand{\xpm}[1]{#1^{\pm}}

\newcommand{\XM}{\bar{X}}

\newcommand{\Quad}[1]{\R_+\langle#1\rangle}

\newcommand{\HA}{\mathfrak{H}_A}
\newcommand{\HX}{\mathfrak{H}_X}
\newcommand{\HAM}{\bar{A}(\HA^c)}
\newcommand{\HXM}{\bar{X}(\HX^c)}

\newcommand{\infinity}{\infty}
\newcommand{\ldot}[2]{#1\cdot#2}



\newcommand{\G}{G}

\newcommand{\hp}{\mathsf{H}^2} 
\newcommand{\hthree}{\mathsf{H}^3}


\newcommand{\ZZ}{Z^1(\Gamma_0,\rto)}
\newcommand{\HH}{H^1(\Gamma_0,\rto)}



\newcommand{\rpt}{\R\mathsf{P}^2}
\newcommand{\basepoint}{o} 
\newcommand{\tbasepoint}{\tilde\basepoint} 
\newcommand{\fg}{\pi_1(\Sigma,\basepoint)} 
\newcommand{\tS}{\tilde{S}} 
\newcommand{\dev}{\mathsf{dev}}
\newcommand{\Isom}{\mathsf{Isom}}
\newcommand{\PGLtR}{\mathsf{PGL}(2,\R)}
\newcommand{\GLtR}{\mathsf{GL}(2,\R)}
\newcommand{\SL}{\mathsf{SL}}
\newcommand{\SLtC}{\SL(2,\C)}
\newcommand{\SLtR}{\SL(2,\R)}
\newcommand{\PSLtC}{\mathsf{PSL}(2,\C)}
\newcommand{\Fricke}{\mathfrak{F}(\topsurf)}
\newcommand{\csch}{\opn{csch}}

\newcommand{\trho}{\tilde{\rho}} 

\newcommand{\Hom}{\mathsf{Hom}}
\newcommand{\Ht}{\mathsf{H}^2}


\newcommand{\g}{g} 
\newcommand{\oa}[2]{\alpha_{#1}(#2)} 
\newcommand{\na}{\mu} 

\begin{document}

\title[Finite-sided deformation spaces]
{Finite-sided deformation spaces of complete affine 3-manifolds}
\author[Charette]{Virginie Charette}
\author[Drumm]{Todd A. Drumm}
\author[Goldman]{William M. Goldman}
\date{\today}  

\thanks{Charette gratefully acknowledges partial support from the
  Natural Sciences and Engineering Research Council of Canada.
  Goldman gratefully acknowledges partial support from National
  Science Foundation grant DMS070781.}

\begin{abstract}
A {\em Margulis spacetime\/} is a complete affine $3$-manifold $M$
with nonsolvable fundamental group. Associated to every Margulis
spacetime is a noncompact complete hyperbolic surface $\surf$.  We
show that every Margulis spacetime is orientable, even though $\surf$
may be nonorientable.  We classify Margulis spacetimes when $\surf$ is
homeomorphic to a two-holed cross-surface $\topsurf$, that is, the
complement of two disjoint discs in $\rpt$.  We show that every such
manifold is homeomorphic to a solid handlebody of genus two, and
admits a fundamental polyhedron bounded by crooked
planes. Furthermore, the deformation space is a bundle of convex
$4$-sided cones over the space of marked hyperbolic structures.
The sides of each  cone are defined by invariants of the
two components of $\partial\topsurf$ and the two orientation-reversing
simple curves.  The two-holed cross-surface, together with the
three-holed sphere, are the only topologies $\topsurf$ for which the
deformation space of complete affine structures is finite-sided.
\end{abstract}

\maketitle

\setcounter{tocdepth}{1} 
\tableofcontents
 
\section{Introduction}

A {\em Margulis spacetime\/} is a geodesically complete flat 
Lorentzian $3$-manifold $M^3$ with free fundamental group.
Such manifolds are quotients $\Eto/\Gamma$ of 
$3$-dimensional {\em Minkowski space\/} $\Eto$ by a discrete group $\Gamma$
of isometries acting properly on $\Eto$. 
By 
\cite{FriedGoldman,Mess} every complete affinely 
flat $3$-manifold 
has solvable fundamental group
or is a  Margulis spacetime.
In the latter case the linear holonomy 
$$
\pi_1(M) \cong \Gamma\xrightarrow{\LL} \SOTO
$$ 
is an embedding of $\Gamma$ onto a discrete subgroup
$\Gamma_0  = \LL(\Gamma)$.
Thus, associated to every Margulis spacetime is a complete hyperbolic
surface $\surf = \hp/\Gamma $. 
This hyperbolic surface has an intrinisic description:
it consists of parallelism classes of timelike lines 
(particles) in $M$.  
In particular $\Gamma$ is an {\em affine deformation\/} of the
Fuchsian group $\Gamma_0$, and we say that $M$ is an 
{\em affine deformation\/}
of the hyperbolic surface $\surf$. 

We conjecture that every Margulis spacetime $M$ is {\em tame},
that is, admits a polyhedral decomposition by crooked planes.
A consequence is that $M$ is {\em topologically tame,\/} that is,
homeomorphic to an open solid handlebody.
We start with the simplest groups of interest,
those whose holonomy is a rank two group.  (The cyclic case is
trivial.)  We previously established the conjecture when $\surf$ is a
three-holed sphere~\cite{CDG}.  In this paper, we establish the
conjecture for its nonorientable counterpart, when $\surf$ is
homeomorphic to a two-holed projective plane.  Every nonorientable
surface of negative Euler characteristic contains an embedded
two-holed cross-surface.  For this reason, we expect the two-holed
cross-surface, like the three-holed sphere, to act as a building block
for proving the general conjecture.

J.\ H.\ Conway has introduced the term {\em cross-surface\/} for the
topological space underlying the real projective plane $\rpt$.  Let
$\topsurf$ be a {\em $2$-holed cross-surface,\/} that is, the topological
surface underlying the complement of two disjoint discs in $\rpt$.
As is common in deformation theory, we fix the topology
of $\topsurf$ and consider {\em marked geometric structures,\/}
that is, homotopy classes of homeomorphisms $\topsurf\rightarrow\surf$,
where $\surf$ is a surface with a hyperbolic structure.

The first part of the paper focuses on the hyperbolic structure.  In
particular we describe the space of marked complete hyperbolic
structures on $\topsurf$.  We call this space the {\em Fricke space}
of $\topsurf$ and denote it by $\Fricke$.  In the second part of the
paper, we explicitly describe the space of all Margulis spacetimes
arising from $\topsurf$ as a bundle of convex $4$-sided cones over
$\Fricke$.

Like~\cite{CDG}, the classification involves both the topology of $M$ and the
geometry of the deformation space.
We show that every affine deformation of $\surf$ is homeomorphic to a solid
handlebody of genus two by constructing an explicit fundamental polyhedron.  
Although the two-holed projective plane is a nonorientable
surface, its affine deformations are orientable  $3$-manifolds.
In fact, every Margulis spacetime is orientable (Lemma~\ref{lem:orientable}).

There are four isotopy classes of 
essential primitive simple closed curves on $\topsurf$.
Two of these curves, denoted $A$ and $B$, 
correspond to components of $\partial\topsurf$. 
The other two, denoted $X$ and $Y$, 
reverse orientation and intersect transversely in one point.

If we fix a hyperbolic structure $\surf$, 
the boundary curves $A$ and $B$ correspond either to
closed geodesics bounding {\em funnels\/} (complete ends
of infinite area with cyclic fundamental group) or {\em cusps\/} 
(ends of finite area).
When a curve $K$ bounds a funnel, 
$\ell_K$ is the (positive) length of the closed geodesic and 
when the boundary curve cuts off a cusp $\ell_K$ is set equal to $0$. 
An orientation-reversing curve  $K$ always corresponds to a closed geodesic, 
so that $\ell_K$ represents this length and is always positive. 
For
$$
\big( \ell_A, \ell_B, (\ell_X,\ell_Y)\big) 
\;\in\;
\big( [0,\infinity)\times [0, \infinity) \big) \times \R_+^2
$$
the lengths satisfy the identity
\eqref{eq:LengthIdentity}, and the Fricke space $\Fricke$ 
identifies with 
$$
\big( [0,\infinity)\times [0, \infinity) \big) \times \R_+ .
$$
The boundary $\partial\Fricke$ has two faces sharing a common edge.
One face is defined by $\ell_A = 0$ and the other by $\ell_B = 0$.
These faces intersect in the line $\ell_A = \ell_B = 0$, which
corresponds to complete hyperbolic structures with finite area.

\medskip

Fix a linear representation $\rho_0$
of $\pi_1(S)$ in the special orthogonal group $\SOTO$.
An {\em affine deformation\/} of $\rho_0$  
is a representation $\rho$ in the group of Lorentzian isometries
whose linear part is $\rho_0$, that is, 
\begin{equation*}
\LL\circ\rho = \rho_0.
\end{equation*}
An affine deformation $\rho$ of $\rho_0$ 
is {\em proper\/} if it defines a proper affine action
of $\pi_1(S)$ on $\Eto$.
Equivalently $\rho$ is the holonomy representation of a complete
affine structure on a $3$-manifold $M^3$ and the isomorphism
$$
\pi_1(S) \longrightarrow \pi_1(M^3)
$$
induced by $\rho_0$ and $\rho$
arises from a homotopy-equivalence 
$ S \longrightarrow M^3$.

Affine deformations of the fixed linear representation $\rho_0$
correspond to {\em cocycles\/} and form a vector space $\ZZ$.
Slightly abusing notation, denote also by $\rto$ the $\Gamma_0$-module
given by the linear holonomy representation $\rho_0$ of $\Gamma_0$ on
the vector space $\rto$.  Translational conjugacy classes of affine
deformations form the {\em cohomology group\/}
$H^1\big(\Gamma_0,\rto\big)$.

When $\surf$ is a two-holed cross-surface,
or, more generally, any surface with $\pi_1(\Sigma)$ free
of rank two,  $\HH \cong \R^3$.
In his original work~\cite{Margulis1,Margulis2}, 
Margulis introduced a 
{\em marked signed Lorentzian length spectrum invariant,\/}
which now bears his name. This $\R$-valued class function on $\pi_1(\surf)$
detects the properness of an affine deformation, and determines an affine
deformation up to conjugacy~\cite{CharetteDrumm_isospectrality,DrummGoldman_isospectrality}.

Under the correspondence between
affine deformations and infinitesimal deformations of the hyperbolic structure on
$\hp$, Margulis's invariant identifies with the first derivative of the 
geodesic length function on Fricke space 
\cite{GoldmanMargulis,Goldman_MargulisInvariant}.
Originally defined for hyperbolic
isometries of $\Eto$, it has been extended to parabolic isometries 
in 
\cite{CharetteDrumm}, and to a function on geodesic currents 
in
\cite{GLM}. 

Given a curve $K\subset\surf$, the Margulis invariant $\mu_K$ is a linear
functional on $\HH$.  The four functionals associated to the curves
$A,B,X,Y$ satisfy the linear dependence \eqref{eq:alphaCDMrelations}.
Let $\Dd$ denote the open cone in $\HH$ such that all the quantities $
\na_A, \;\na_B , \;\na_X , \;\na_Y.  $ are either all positive or all
negative.  Figure~\ref{fig:DeformationSpace} offers a projective view
of $\Dd$; each line corresponds to a zero-set of a $\mu_K$ for some
curve $K$. 

Proper affine deformations will be investigated via 
{\em crooked  planes\/}, polyhedral surfaces introduced 
in
\cite{Drumm_thesis,Drumm_jdg,DrummGoldman_crooked}.  
A proper affine deformation is tame if it admits a fundamental
polyhedron bounded by crooked planes.  
The {\em Crooked Plane Conjecture\/} 
\cite{DrummGoldman_crooked,CDG}, which we prove for affine
deformations of the two-holed cross-surface, asserts that every
proper affine deformation is tame.

Denote the subspace of $\HH$ consisting of proper affine
deformations by $\Pr$ and the subset of $\Pr$ consisting 
of tame affine deformations by $\Ta$. 
Our main result is that $$\Ta \;=\; \Pr\;=\Dd.$$

%

\begin{thmmain}
Let $\surf$ be a two-holed cross-surface.  Then every proper affine deformation of $\surf$ is tame and the affine $3$-manifold is homeomorphic to a handlebody
of genus two.
\end{thmmain}

Let us underscore the fact that $\Pr$ is finite-sided, just like in the
case of the three-holed sphere, where proper deformations make up
a three-sided polyhedral cone~\cite{CDG}. In all other cases, the space of
proper affine deformations has infinitely many faces. 
In particular, for the other two surfaces of Euler characteristic
$ -1$ (or equivalently, surfaces with fundamental group of rank two), 
namely the one-holed torus and one-holed Klein bottle,
the space of proper affine deformations is defined by (necessarily)
infinitely many linear conditions~\cite{Charette_nonproper,CDGp}.

\subsection*{Acknowledgments}
We are grateful to Francis Bonahon, Suhyoung Choi, David Gabai, 
Fran\c cois Labourie, Grisha Margulis and  Yair Minsky, 
Ser-Peow Tan for helpful conversations. 
We also thank the referee for many useful suggestions.

\section{The two-holed cross-surface}\label{sec:thx}

In this first section, we investigate the topology of a two-holed cross-surface.  Then we endow it with a hyperbolic structure.

\subsection{Curves and the fundamental group of $\topsurf$}

We begin by reviewing the topology of the two-holed cross-surface
$\topsurf$.  Recall that $X$ and $Y$ are orientation-reversing curves and that $A$ and $B$ bound $\topsurf$. 

Represent $\topsurf$ as the nonconvex component
of the complement of two disjoint discs in $\rpt$ as follows. 
The coordinate axes in $\R^2$ extend to projective lines in $\rpt$
which intersect exactly once (at the origin) in $\R^2$. Choose two disjoint
hyperbolas in $\R^2$ which intersect the ideal line in pairs of points
which do not separate each other. In $\rpt$ these hyperbolas extend to
conics which bound two disjoint convex regions. The complement of these
convex regions is a model for $\surf$.
In  Figure~\ref{fig:hyperbolas}, the complement of $X$ and $Y$  in $\surf$ is 
foliated by curves homotopic to the boundary curves $A$ and $B$.

Choosing orientations and arcs from a basepoint  
$\basepoint$ to each of these four curves,
define elements of $\fg$ corresponding to $A, B, X, Y$. 
Denoting these elements also by $A,B,X,Y$, respectively, 
yields the redundant presentation
\begin{equation}\label{eq:redundant}
\fg \;=\; \langle A, B, X, Y \mid X Y = A, \, \bar{Y} X = B \rangle
\end{equation}
Note that any two-element subset of $\{A,B,X,Y\}$ 
{\em except\/} $\{A,B\}$ freely generates $\fg$
and 
\begin{equation}
A B = X^2.
\end{equation}
Simple closed curves on $\surf$ are easily classified. 
(See for example 
\cite{Bonahon}.)
If $\gamma\subset S$ is a simple closed curve, then exactly one of the following holds:
\begin{itemize}
\item $\gamma$ is contractible and  bounds a disc;
\item $\gamma$ is peripheral and is isotopic to either $A$ or $B$;
\item $\gamma$ is orientation-reversing and is isotopic to either $X$ or $Y$;
\item $\gamma$ is essential, nonperipheral, orientation-preserving and is isotopic to either $X^2$ or $Y^2$.
\end{itemize}

\setcounter{figure}{0}
\begin{figure}[b]
\includegraphics[scale=1.0]
{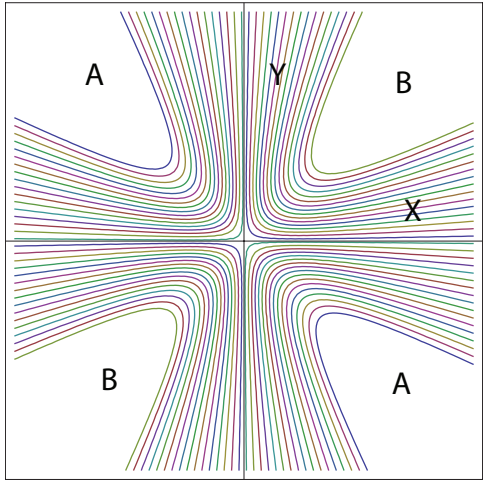}
\caption{The complement of two disjoint discs in 
$\rpt$ is homeomorphic to a $2$-holed cross-surface.
The coordinate axes correspond to the two orientation-reversing 
simple loops $X$ and $Y$.
The two hyperbolas $A$ and $B$ define components of the boundary.}
\label{fig:hyperbolas}
\end{figure}

\subsection{Hyperbolic structures on $\topsurf$}
Consider a complete hyperbolic surface $S$ and a diffeomorphism (the {\em  marking\/})
$\Sigma\longrightarrow S$. 
Choose a universal covering $\tS\to S$, 
and a lift $\tbasepoint\in\tS$ of the basepoint $\basepoint\in\surf$.
Choose a developing map
$\tS\xrightarrow{\dev} \hp$ and holonomy representation 
$$
\fg \xrightarrow{\rho_0} \Gamma \subset \Isom(\hp)\cong\PGLtR.
$$
The holonomy representation embeds $\fg$ as a discrete subgroup
$\Gamma_0 = \rho_0(\fg)$.
The hyperbolic surface $S$ may have finite or infinite area. 

When $\surf$ is nonorientable,
some elements of $\Gamma$  reverse orientation.
When $S$ is a two-holed cross-surface as above,
$\rho_0(X)$ and $\rho_0(Y)$ are glide-reflections, 
while $\rho_0(A)$ and $\rho_0(B)$ are either transvections
or parabolic isometries of $\Ht$.
(Compare Figure~\ref{fig:FundamentalDomain}.)

Henceforth, when the context is unambiguous,
we suppress $\rho_0$, for example, simply writing $A$ for $\rho_0(A)$.

\subsection{Fundamental domains for the two-holed cross-surface}
\label{sec:funddomainsHypSurf}

We now
construct a fundamental domain for the action of the associated group
$\Gamma_0$ on the {\em Nielsen convex region,\/} a convex
$\Gamma_0$-invariant open subset $\Omega \subset\hp$.  Fundamental
domains for incomplete surfaces give rise to fundamental domains
in $\Eto$ for complete Margulis space-times.

Denote the attracting and repelling fixed points of a hyperbolic
isometry $W$ of $\hp$ by $\xp{W}$ and $\xm{W}$ respectively.  
If $W$ is parabolic 
then either annotation represents the fixed point on $\partial\hp$.
Assume the fixed points 
\begin{equation*}
\xp{X},\xp{Y},\xm{X},\xm{Y}
\end{equation*}
are in counter-clockwise order. 
(Compare Fig.\ref{fig:FundamentalDomain}.)
In $\hp$, let $\idealquad$ be the ideal quadrilateral 
with vertices:
\begin{equation*}
\xp{B},\xm{A}, A(\xp{B}), X(\xp{B})
\end{equation*}
(Compare Figure~\ref{fig:FundDomainIdentifiedN1}.)
The transvection $A$ and the glide-reflection $X$ identify the four sides
of $\idealquad$ as follows:
\begin{align*}
\Big(\xm{A},\,\xp{B}\Big)     &\;\stackrel{A}\longmapsto\; \Big(\xm{A},\, A(\xp{B})\Big) \\
\Big(\xp{B},\, X(\xp{B})\Big) &\;\stackrel{X}\longmapsto\; \Big(X(\xp{B}),\, A(\xp{B})\Big)
\end{align*}
since $A(\xp{B}) = X^2(\xp{B})$.
(Compare Figure~\ref{fig:FundDomainIdentifiedN1}.)
The diagonal
$\Big(\xm{A},\, X(\xp{B})\Big)$ divides $\mathcal{Q}$
into two ideal triangles. 
(Compare Figure~\ref{fig:IdealQuadrilateral}.)
Denote the interior of the complement of a halfplane
$\mathfrak{H}$ by $\mathfrak{H}^c$.
The sides of $\idealquad$ bound four disjoint halfplanes which are pairwise
identified by $A$ and $X$ to define a Schottky-like system:
\begin{itemize}
\item
The side $\Big(\xm{A},\, A(\xp{B})\Big)$ bounds a halfplane $\HA$; 
\item
The transvection $\bar{A}$ maps 
$\Big(\xm{A},\, A(\xp{B})\Big)$ 
to the side $\Big(\xm{A},\,\xp{B}\Big)$
bounding the halfplane $\bar{A}(\HA^c)$.
\item The side 
$\Big(X(\xp{B}),\, A(\xp{B})\Big)$ bounds a halfplane $\HX$;
\item
The glide-reflection $\bar{X}$ maps 
$\Big(X(\xp{B}),\, A(\xp{B})\Big)$ to the side
$\Big(\xp{B},\, X(\xp{B})\Big)$ bounding the halfplane $\bar{X}(\HA^c)$.
\item
The four halfplanes 
$$
\HA, \bar{A}(\HA^c), \HX, \bar{X}(\HA^c)
$$ 
are pairwise disjoint.
\end{itemize}
The fundamental domain and its corresponding set of identifications for the
hyperbolic surface $S = \Omega/\Gamma_0$ 
serve as a template for the crooked fundamental domains constructed in
\S\ref{sec:CrookedFD}. The positions of the halfplanes and their bounding
geodesics  play a crucial role in ensuring disjointness of the crooked 
planes.
\newpage

\begin{figure}[t]
\psfrag{m}{$X^-$}
\psfrag{V}{$X^+$}
\psfrag{Z}{$Y^-$}
\psfrag{W}{$Y^+$}
\psfrag{X}{$X$}
\psfrag{Y}{$Y$}
\psfrag{A}{$A$}
\psfrag{B}{$B$}
\psfrag{P}{$A^Y\,=\,A^{\bar{X}}$}
\psfrag{Q}{\qquad\qquad$B^Y\,=\,B^X$}
\psfrag{o}{$\dev(\tbasepoint)$}
\includegraphics[scale=1.1]
{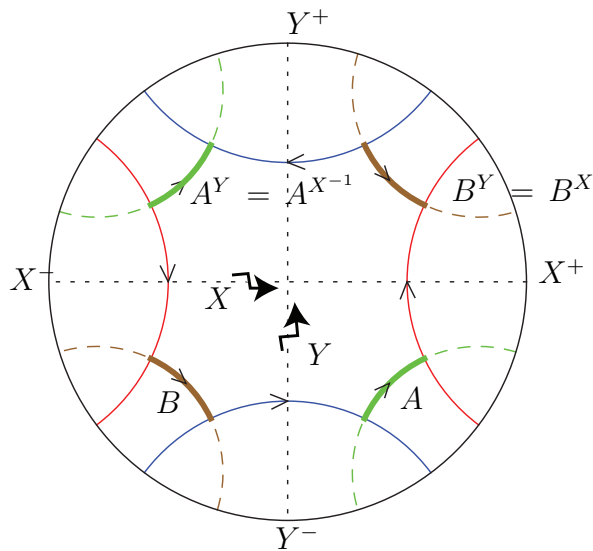}
\caption{
A fundamental domain for the two-holed cross-surface $\surf$.
}
\label{fig:FundamentalDomain}
\end{figure}

\begin{figure}[h]
\psfrag{X}{$X$}
\psfrag{A}{$A$}
\psfrag{B}{$\xp{B}$}
\psfrag{C}{$\xm{A}$}
\psfrag{D}{$A(\xp{B})$}
\psfrag{E}{$X(\xp{B})$}
\includegraphics[scale=1.]
{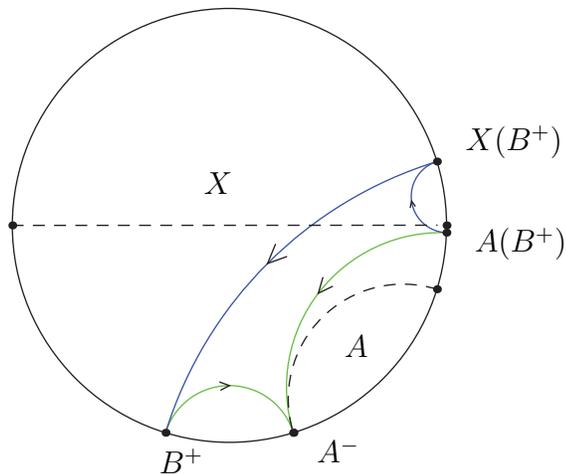} 
\caption{Identifications of the ideal quadrilateral.
The transvection $A$ identifies the bottom two sides,
which share the ideal vertex $\xm{A}$.
The glide-reflection $X$ identifies the top two
sides, which share the ideal vertex $X(\xp{B})$.
The arrows on the sides indicate how to identify the
sides.
}
\label{fig:FundDomainIdentifiedN1}
\end{figure}

\begin{figure}[h]
\psfrag{A}{$\HXM$}
\psfrag{B}{$\xp{B}$}
\psfrag{C}{$\HAM$}
\psfrag{E}{$\xm{A}$}
\psfrag{F}{$\HA$}
\psfrag{G}{$A(\xp{B})$}
\psfrag{H}{$\HX$}
\psfrag{I}{$X(\xp{B})$}
\includegraphics[scale=1.11]
{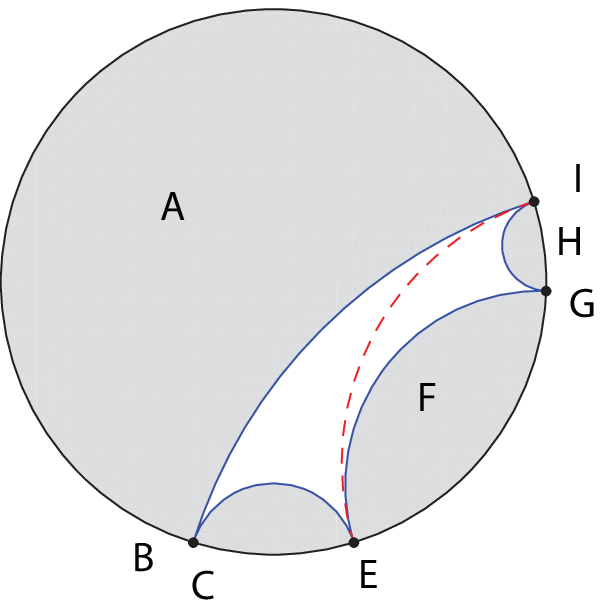}
\caption{The ideal quadrilateral $\idealquad$ with diagonal,
and the four halfplanes $\idealquad$ bounds. 
}
\label{fig:IdealQuadrilateral}
\end{figure}



\section{The Fricke space of $\topsurf$}
This section describes the deformation space of
marked hyperbolic structures on $\topsurf$ in
terms of representations and their characters in $\SLtC$.
Then we find a simple set of coordinates for $\Fricke$
in terms of traces of $2\times 2$ real matrices.

\subsection{Representing orientation-reversing isometries by matrices}
For the following discussion of the Fricke space of the two-holed
cross-surface, represent $\hp$ as a totally geodesic hypersurface in
hyperbolic $3$-space $\hthree$.
{\em Every\/} isometry of $\hp$ extends uniquely to an 
{\em orientation-preserving\/} isometry of $\hthree$ 
preserving $\hp\subset\hthree.$
(Compare \cite{Goldman,Marden}.
To use the algebraic machinery of traces in $\SLtC$, 
we must also lift $\rho_0$ to a representation $\trho_0$ in the double
covering $\SLtC$ of the identity component 
$$
\Isom^+(\hthree)\cong\PSLtC.
$$
Such a lift is always possible since $\fg$ is a free group.
Denote the lifted elements of $\SLtC$ by $A, B, X, Y$ respectively, 
as well.

Following  \cite{Goldman}, represent a lift of an isometry of $\hp$ as an element
of $\SLtC$ which is real if the isometry preserves orientation on $\hp$, 
and purely imaginary it the isometry reverses orientation $\hp$. 
In the real case, the matrix lies in $\SLtR$, and is defined up to $\pm 1$.
It may be elliptic, parabolic or hyperbolic. If it is parabolic, then it is conjugate
to:
\begin{equation}\label{eq:Unipotent}
\pm \bmatrix 1 & 1 \\ 0 & 1 \endbmatrix
\end{equation}
If it is hyperbolic then it is conjugate to the diagonal matrix:
\begin{equation}\label{eq:Transvection}
\pm \bmatrix e^{\ell/2}  & 0 \\ 0 & e^{-\ell/2} \endbmatrix
\end{equation}
with trace $\pm 2 \cosh(\ell/2)$. 
It corresponds to a {\em transvection\/} of $\Ht$.
It leaves invariant a geodesic (its {\em invariant axis),\/}
which for the above example corresponds to the imaginary axis in the upper halfplane model, and it displaces points on its axis by length $\ell$.

In the purely imaginary case, 
the corresponding purely imaginary element of $\SLtC$ is $i P$, 
where $P\in\GLtR$ and $\det(P)\,= \, -1$.
For example the diagonal matrix
\begin{equation}\label{eq:GlideReflection}
iP = i \bmatrix e^{\ell/2 } & 0 \\ 0 & -e^{-\ell/2} \endbmatrix 
\;\in\; \SLtC
\end{equation}
represents a glide-reflection of displacement length $\ell$ along the 
geodesic in the
upper halfplane represented by the imaginary axis. 
Its trace equals $2i\sinh(\ell/2)$. 
Since a matrix in $\SLtC$ representing an isometry of hyperbolic space 
is only determined up to multiplication by $\pm 1$, 
the matrix in $\SLtC$ representing a glide-reflection of
displacement length $\ell$ has trace $\pm 2i \sinh(\ell/2)$.

\subsection{Trace coordinates}
Suppose that $\trho_0$ is a representation in $\SLtC$ 
(preserving $\hyp\subset\hthree$) which covers
a holonomy representation of a marked hyperbolic structure
on $\Sigma$. 

The character of $\trho_0$ corresponds to a quadruple
$(a,b,x,y)\in\R^4$ defined by:
\begin{align*}
a &:= \tr(A) \\
b &:= \tr(B) \\
x &:= -i\ \tr(X)  \\
y &:= -i\  \tr(Y) 
\end{align*}
subject to the trace identity
\begin{equation}\label{eq:trace_identity}
a + b + x y \;=\; 0,
\end{equation}
which arises directly from the ``Basic Trace Identity'' in \cite{Goldman}:
\begin{equation*}
 \tr (XY) + \tr (X\bar{Y}) = \tr (X) \tr (Y)
\end{equation*}
\big(which in turn is just the Cayley-Hamilton theorem for $\SLtC$\big).

The {\em character set\/}
\begin{equation}\label{eq:CharacterSet}
\mathscr{C} :=
\{ (a,b,x,y)\in\R^4 \mid   a + b = - x y,\ \vert a\vert \ge 2, \vert b\vert \ge 2 \}
\end{equation}
identifies with conjugacy classes of lifts to $\SLtC$ of holonomy representations
of marked complete hyperbolic structures on $\Sigma$.

Since $\{X,Y\}$ freely generates $\fg$, different lifts $\trho_0$ of $\rho_0$
differ by {\em multiplication by a character\/}
$$
\fg\xrightarrow{\chi}\{\pm 1\}
$$
as follows. 
The group 
$$
\mathscr{G}\ :=\  \Hom\big(\fg,\{\pm 1\}\big)\ \cong\  \Z/2 \oplus \Z/2
$$ 
acts on $\Hom(\fg,\SLtC)$ by pointwise multiplication:
$$
\gamma \stackrel{\chi\cdot\rho}\longmapsto \chi(\gamma)\rho(\gamma)
$$
which is a representation since $\{\pm 1\} \subset\SLtC$ is central.

On the quotient of $\Hom\big(\fg,\SLtC\big)$ 
by $\Inn\big(\SLtC\big)$, 
the induced $\mathscr{G}$-action
is described by the action on traces: 
\begin{align*}
(a,b,x,y) &\mapsto (a,b,-x,-y); \\
(a,b,x,y) &\mapsto (-a,-b,-x,y); \\
(a,b,x,y) &\mapsto (-a,-b,x,-y).
\end{align*}
Changing the sign of either $\tr(X)$ or $\tr(Y)$ results in changing the sign of $\tr(A)$ and $\tr(B)$, 
while changing the signs of both $\tr(X)$ and $\tr(Y)$ 
leaves the signs of $\tr(A)$ and $\tr(B)$ unchanged. 

The Fricke space $\Fricke$ then identifies with the quotient 
$\mathscr{C}/\mathscr{G}$.
Since none of  $a,b,x,y$ vanish, the action of $\mathscr{G}$ is free and $\Fricke$ also 
identifies with one of the four connected components of the character set.

\subsection{Relating traces to lengths}
The nonperipheral essential orien\-ta\-tion-reversing simple curves $X$ and $Y$ are uniquely
represented by closed geodesics. Similarly the boundary curves $A$ and $B$ are closed
geodesics or cusps. Denote the lengths of these geodesics by $\lX, \lY,\lA, \lB$
respectively. Denote the angle of intersection of the geodesic representatives $X$ and $Y$
by $\theta$. Explicit matrix representatives are given below:

\begin{align*}
X& \longleftrightarrow 
i\bmatrix 
\sinh\frac{\lX}{2} + \cos\theta \cosh\frac{\lX }{2}  &
\sin \theta  \cosh\frac{\lX}{2}  \\
\sin\theta \cosh\frac{\lX}{2} & 
\sinh\frac{\lX}{2} - \cos\theta \cosh\frac{\lX}{2} 
\endbmatrix \\
Y & \longleftrightarrow i\bmatrix  e^{\lY/2 } & 0 \\ 0 & -e^{-\lY/2 }
\endbmatrix.
\end{align*}
Therefore,
\begin{align}\label{eq:trace2length}
x & =\; 2 \sinh\frac{\lX}{2} \\
y & =\;  2 \sinh\frac{\lY}{2} \notag \\
a & =\;  
-2 \big(\sinh\frac{\lX}{2} \sinh\frac{\lY}{2} 
+ \cos\theta  
\cosh\frac{\lX}{2} \cosh\frac{\lY}{2}\big)\notag \\
b& =\; 
-2 \big(\sinh\frac{\lX}{2} \sinh\frac{\lY}{2}
- \cos\theta
\cosh\frac{\lX}{2} \cosh\frac{\lY}{2} \big). \notag 
\end{align}
The defining inequalities for these matrices, $x, y > 0$ and $a, b \le -2$,
describe one component of the set of characters.
The \emph{length identity }
\begin{equation}\label{eq:LengthIdentity}
\cosh \frac{\lA}{2} \,+\, \cosh\frac{\lB}{2}  \;=\;  
2\ \sinh\frac{\lX}{2}  \,  \sinh\frac{\lY}{2} .
\end{equation}
results directly from applying 
\eqref{eq:trace2length} to \eqref{eq:trace_identity}.

\begin{thm}\label{thm:2HXSFrickeSpace}
The lengths  of boundary geodesics $\lA, \lB \geq 0$, and the lengths of 
orientation-reversing simple closed geodesics  $\lX,\lY >0$,
subject to \eqref{eq:LengthIdentity}, 
provide coordinates for $\Fricke$. 
\end{thm}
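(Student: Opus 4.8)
The plan is to produce an explicit homeomorphism between the constrained length region and the component of the character set $\mathscr{C}$ that was already identified with $\Fricke$, and to verify that this map carries the length identity \eqref{eq:LengthIdentity} to the trace identity \eqref{eq:trace_identity}. Since $\Fricke$ has been shown to identify with the connected component of $\mathscr{C}$ cut out by $x,y>0$ and $a,b\le -2$, it suffices to parameterize that component by the four lengths.

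First I would record the length--trace dictionary. The glide-reflection normal form \eqref{eq:GlideReflection} together with \eqref{eq:trace2length} gives $x = 2\sinh(\lX/2)$ and $y = 2\sinh(\lY/2)$ with $\lX,\lY>0$. For the boundary curves, $A=XY$ and $B=\bar{Y}X$ are products of two orientation-reversing isometries, hence orientation-preserving and represented by real matrices in $\SLtR$; being peripheral in a complete hyperbolic structure they are transvections or parabolics, never elliptic. By \eqref{eq:Transvection} (and \eqref{eq:Unipotent} in the parabolic case), together with the sign normalization $a,b\le -2$ defining our component, this forces
\begin{equation*}
a = -2\cosh\tfrac{\lA}{2}, \qquad b = -2\cosh\tfrac{\lB}{2}, \qquad \lA,\lB\ge 0,
\end{equation*}
where $\lA,\lB$ are the translation lengths of the boundary geodesics (and $0$ at a cusp). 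These are consistent with \eqref{eq:trace2length}: solving for $\cos\theta$ recovers $\cosh(\lA/2) = \sinh\tfrac{\lX}{2}\sinh\tfrac{\lY}{2} + \cos\theta\,\cosh\tfrac{\lX}{2}\cosh\tfrac{\lY}{2}$, and similarly for $b$.

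Next I would define
\begin{equation*}
\Phi(\lA,\lB,\lX,\lY) = \Big(-2\cosh\tfrac{\lA}{2},\; -2\cosh\tfrac{\lB}{2},\; 2\sinh\tfrac{\lX}{2},\; 2\sinh\tfrac{\lY}{2}\Big)
\end{equation*}
on the region $\{\lA,\lB\ge 0,\ \lX,\lY>0\}$ subject to \eqref{eq:LengthIdentity}, and check that it is a homeomorphism onto the component $x,y>0,\ a,b\le -2$ of $\mathscr{C}$. Well-definedness and the target constraints are immediate, since $-2\cosh(t/2)\le -2$ and $2\sinh(t/2)>0$, while \eqref{eq:LengthIdentity} becomes exactly \eqref{eq:trace_identity} under the substitution. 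Injectivity follows because $t\mapsto\cosh(t/2)$ is a monotone bijection $[0,\infty)\to[1,\infty)$ and $t\mapsto\sinh(t/2)$ a monotone bijection $(0,\infty)\to(0,\infty)$; surjectivity follows by inverting these with $\mathrm{arccosh}$ and $\mathrm{arcsinh}$, the resulting lengths automatically satisfying \eqref{eq:LengthIdentity} because $a+b=-xy$ holds on $\mathscr{C}$. Both $\Phi$ and $\Phi^{-1}$ are visibly continuous, so composing with the earlier identification of $\Fricke$ with this component yields the asserted coordinates.

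The part requiring care — rather than a true obstacle — is the bookkeeping of signs and inequalities: confirming that the component $x,y>0$, $a,b\le -2$ is precisely the one swept out by positive $\lX,\lY$ and nonnegative $\lA,\lB$, and that $A,B$ are never elliptic so that $|a|,|b|\ge 2$ holds with $\lA,\lB$ interpreted as boundary-geodesic lengths. I would also flag that $\Phi$ is only a homeomorphism of manifolds-with-corners and not a diffeomorphism in the length variables, since $\partial a/\partial\lA = -\sinh(\lA/2)$ vanishes on the face $\lA=0$; this is harmless, as the faces $\lA=0$ and $\lB=0$ correspond to the cusped loci $a=-2$ and $b=-2$ bounding $\Fricke$ described above.
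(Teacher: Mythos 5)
Your proposal is correct and follows essentially the same route as the paper: the paper identifies $\Fricke$ with the component $x,y>0$, $a,b\le -2$ of the character set $\mathscr{C}$ and relies on the trace coordinates being monotone functions of the lengths, with the trace identity \eqref{eq:trace_identity} translating into the length identity \eqref{eq:LengthIdentity}. Your write-up simply makes the homeomorphism $\Phi$ and its inverse explicit, which the paper leaves implicit in the discussion surrounding \eqref{eq:trace2length} and the remark after the theorem statement.
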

\noindent
Solve \eqref{eq:LengthIdentity} for $\lY$ in terms of the other
lengths. Thus, $\Fricke$  identifies with the fibered space whose base
is the closed first quadrant in $\R^2$ ($\lA, \lB \geq 0$) and whose
fiber $\R^+$ ($\lX >0$) has no boundary.  That is,
$$
\Fricke \ \approx \ 
\big( [0,\infinity)\times [0, \infinity) \big) \times \R_+ .
$$

\section{Flat Lorentz $3$-manifolds}\label{sec:3manifolds} 

Now we turn from hyperbolic structures on surfaces
to flat Lorentz structures on $3$-manifolds.
After a brief review of Lorentzian geometry, 
we prove that every Margulis spacetime is orientable,
even when the corresponding hyperbolic surface is nonorientable.

\subsection{Linear Lorentzian geometry}
Let $\rto$ denote a Lorentzian $3$-dimensional vector space.
Denote its group of orientation-preserving linear isometries 
$\SOTO$.  In keeping with the notation adopted for $\Isom(\hp)$, 
we use uppercase letters to denote elements of this group.

The nonzero vectors $\vv\in\rto$ such that 
\begin{itemize}
 \item $\vv\cdot\vv =0$ are called \emph{lightlike},
 \item $\vv\cdot\vv <0$ are called \emph{timelike}, and
 \item $\vv\cdot\vv >0$ are called \emph{spacelike}.
\end{itemize}
Any lightlike vector lies on one nappe of the \emph{lightcone}. 
Choose one of these nappes as the  \emph{future lightcone}.
Any vector lying on (lightlike)  or inside (timelike)  the future lightcone is said 
to be \emph{future-pointing}.

The correspondence between the hyperbolic plane and the set of
timelike lines in $\rto$ induces an identification
$\Isom(\hp)\cong\SOTO$.  
Transvections 
correspond to {\em hyperbolic} matrices in $\soto$ with trace greater
than 3. The diagonal matrix \eqref{eq:Transvection} corresponds to 
$$
\bmatrix 1 & 0 & 0 \\ 0 &  \cosh(\ell) & \sinh(\ell) \\
0 & \sinh(\ell) & \cosh(\ell)  \endbmatrix
$$
with trace $1 + 2\cosh(\ell)$.
Parabolic elements of $\SLtR$ correspond to {\em parabolic\/} matrices in 
$\soto$ with trace equal to 3.  The matrix $iP$ of \eqref{eq:GlideReflection}
representing a glide-reflection identifies with the diagonal matrix
$$
\bmatrix 1 & 0 & 0 \\ 0 &  -\cosh(\ell) & -\sinh(\ell) \\
0 & -\sinh(\ell) & -\cosh(\ell)  \endbmatrix
$$
having trace $1 -2\cosh(\ell) < -1$. 
The corresponding isometry of $\Eto$ preserves orientation but reverses time-orientation.

Call $A\in\SOTO$ {\em non-elliptic} if it does not fix a timelike
line.  Equivalently, $A$ corresponds to a glide-reflection, a
transvection or a translation of $\hp$ as above.

Suppose that $A$ is non-elliptic.  Then the 1-eigenspace for $A$ is a
line spanned by a spacelike or a lightlike vector.  We choose a
specific {\em neutral vector}, denoted $A^0$, by requiring that:
\begin{itemize}
\item given a timelike vector $\vv$, $\big( \vv, A^2(\vv), A^0 \big)$ is a right-handed basis for $\rto$.  
(This determines a unique direction for $A^0$.
Furthermore this condition is independent of $\vv$.)
\item if $\Fix(A)$ is spacelike, we choose $A^0$ such that $A^0\cdot A^0=1$.
\end{itemize}
When $A$ is hyperbolic (a transvection or glide-reflection), 
its  eigenvalues are $1$, $\lambda$, and $\lambda^{-1}$, 
where 
$$
0< \lambda^2 <1.
$$
In the above example, $\lambda = \pm e^{-\ell}$.
Choose $A^-$ to be a future-pointing lightlike \emph{contracting} eigenvector, 
and $A^+$ to be a future-pointing \emph{expanding} eigenvector: 
$$
A^2(A^-) = \lambda^2 A^-,\qquad  A^2(A^+) = \lambda^{-2} A^+.
$$  
Both contracting and expanding eigenvectors are lightlike, and the
neutral vector  $A^0$ is spacelike and well-defined.   
(See Figure~\ref{fig:Frames} and compare 
\cite{CharetteDrumm}.) 
\begin{figure}[b]
\includegraphics[width=8cm]{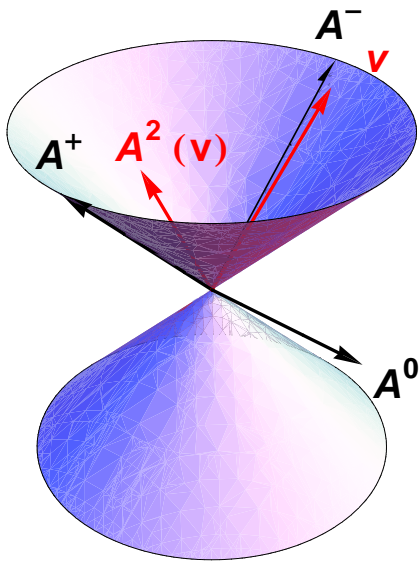}
\caption{Defining the direction of $A^0$.}
\label{fig:Frames}
\end{figure}

When $A$ is parabolic, $A^0$ may be either future-pointing or
past-pointing and its Euclidean length is arbitrary.  By convention,
we set $A^+=A^-$ to be a future-pointing vector that is parallel to
$A^0$.

\subsection{Affine actions}
Let $\Eto$ denote the affine space modeled on $\rto$.  The group of {\em Lorentzian isometries} of $\Eto$, denoted $\Isom(\Eto)$, consists of affine transformations whose linear part preserve the Lorentzian structure on $\rto$.  Equivalently, $\g\in\Isom(\Eto)$ if and only if its linear part $\LL(\g)\in\oto$.

Let $A\in\SOTO$ be non-elliptic.  
Since $A$ is linear and therefore fixes the origin, 
the cyclic group $\langle A\rangle$ it generates does not
act properly on $\Eto$.
However, for any vector $\vu\in\rto$ with nonzero projection on $A^0$,
the following affine transformation acts properly and freely on
$\Eto$:
$$
g: p \longmapsto o+A(p-o) + \vu
$$
where $o$ is some choice of origin.  The quotient $\Eto/\langle g\rangle$ is an open solid torus. 

More generally, if $\Gamma_0\subset\SOTO$ is a free group, we obtain an affine action with linear part $\Gamma_0$ by assigning translation parts to a set of free generators.

\subsection{Margulis spacetimes}
Recall that a Margulis spacetime is a quotient $M = \Eto/\Gamma$
whose fundamental group $\Gamma$ is free, and acts properly, freely and discretely
by Lorentz isometries.

\begin{lemma}\label{lem:orientable}
Every Margulis spacetime with a nonabelian fundamental group is orientable.
\end{lemma}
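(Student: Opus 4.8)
The plan is to reduce orientability of $M=\Eto/\Gamma$ to a statement about determinants of linear parts, and then to extract that statement from properness together with the hyperbolic structure on $\surf$. Since $\Eto$ carries a natural orientation and $M=\Eto/\Gamma$, the manifold is orientable exactly when every deck transformation preserves orientation, that is, when $\det\LL(\gg)>0$ for each $\gg\in\Gamma$. The nonabelian hypothesis places the linear holonomy in $\oto$ (by the Fried--Goldman/Mess structure theory invoked in the Introduction), and since any $\LL(\gg)\in\oto$ has determinant $\pm1$, the entire content of the lemma is to promote $\Gamma_0=\LL(\Gamma)\subset\oto$ to $\Gamma_0\subset\SOTO$.

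First I would record that no nontrivial element of $\Gamma$ is elliptic. Indeed $\Gamma_0$ is the holonomy of the complete hyperbolic surface $\surf=\hp/\Gamma_0$ and is isomorphic to the free group $\Gamma$, hence discrete and torsion-free; a discrete torsion-free subgroup of $\Isom(\hp)$ contains no elliptic isometries, so no $\LL(\gg)$ fixes a timelike line. Equivalently, $\surf$ being a surface (not an orbifold) forces $\Gamma_0$ to act freely on $\hp$.

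Next I would run an eigenvalue argument. Any $A\in\oto$ has spectrum closed under $\lambda\mapsto\lambda^{-1}$, so it has the form $\{\lambda,\lambda^{-1},\mu\}$ with $\mu=\det A=\pm1$. Suppose $\det A=-1$, so the spectrum is $\{\lambda,\lambda^{-1},-1\}$; I would derive a contradiction in two cases. When $\lambda\neq1$ the value $+1$ is not an eigenvalue, so $I-A$ is invertible and the affine map $\gg\colon \ap\mapsto A\ap+\vu$ has the fixed point $(I-A)^{-1}\vu$, contradicting freeness of the action. When $\lambda=1$ the spectrum is $\{1,1,-1\}$; since eigenspaces for distinct eigenvalues of an element of $\oto$ are orthogonal with respect to the Lorentzian form, the $(-1)$-eigenline is nondegenerate. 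If it is timelike then $A$ preserves it directly; if it is spacelike then its orthogonal $(+1)$-eigenplane has Lorentzian signature and contains a timelike fixed line. Either way $A$ fixes a timelike line, making $A$ elliptic and contradicting the previous paragraph. Hence $\det A=+1$ for every $\gg$, so $\Gamma_0\subset\SOTO$ and $M$ is orientable.

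I expect the main obstacle to be the borderline spectrum $\{1,1,-1\}$: here $+1$ is an eigenvalue, so the cheap fixed-point argument fails and one must instead use the geometry of the eigenspaces. The delicate point is to exclude a nonsemisimple possibility with a lightlike fixed line (which would be nonelliptic and could be fixed-point free); orthogonality of the $\pm1$-eigenspaces is exactly what rules this out, since a lightlike eigenline would have to lie in its own orthogonal complement, forcing it into the complementary eigenspace---an impossibility. The remaining steps, reducing orientability to determinants and eliminating elliptics, are routine given the framework of Section~\ref{sec:3manifolds}.
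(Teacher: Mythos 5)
Your proof is correct, and its first half coincides with the paper's: both arguments use freeness of the affine action to force $1$ to be an eigenvalue of every linear part (otherwise $I-\LL(\gg)$ is invertible and $\gg$ has a fixed point), and both observe that for $\LL(\gg)\in\oto\setminus\SOTO$ this pins the spectrum down to $\{1,1,-1\}$. Where you diverge is the endgame. The paper notes that such an element satisfies $\LL(\gg)^2=I$, so $\gg^2$ would be a nontrivial translation ($\Gamma$ being free, hence torsion-free), and then quotes Fried--Goldman's theorem that $\Gamma$ contains no translations. You instead show that such an element must fix a timelike line, hence is elliptic, and exclude that using discreteness and torsion-freeness of the linear holonomy $\Gamma_0\subset\Isom(\hp)$. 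Both finishes lean on the structure theory cited in the introduction, just on different pieces of it: the paper on ``no translations,'' you on ``$\LL$ embeds $\Gamma$ as a discrete subgroup.'' Your version has the small advantage of never leaving the linear group, and you are right to flag the semisimplicity issue for the spectrum $\{1,1,-1\}$: the paper's assertion that $A^2=I$ silently uses the same fact. The clean way to settle it is the one you gesture at --- the generalized $1$-eigenplane is orthogonal to the $(-1)$-eigenline, so the latter is nondegenerate and the former is a nondegenerate plane on which a unipotent isometry must be trivial; your phrasing about a lightlike eigenline lying in its own orthogonal complement is slightly garbled but the underlying orthogonality argument is sound.
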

\begin{proof}
For any Margulis spacetime $M$, 
the affine holonomy group $\Gamma$
acts freely on $\Eto$.
An affine transformation whose linear
part does not have $1$ as an eigenvalue
fixes a point in $\Eto$.
Therefore, for every $\gamma\in\Gamma$,
its linear part 
$\LL(\gamma)$ must have $1$ as an eigenvalue.

If $\LL(\gamma)\in\oto\setminus\SOTO$, then one of its eigenvalues 
equals $-1$.   If $1$ is also an eigenvalue, then $A^2 = I$.
Thus, if $\LL(\Gamma)\not\subset \SOTO$, 
then some $\gamma\in\Gamma$ will have 
trivial linear part, and therefore is
a translation. 
By \cite{FriedGoldman}, $\Gamma$ contains
no translations.
\end{proof}
\section{Affine deformations and cocycles}\label{sec:deformations} 
Affine deformations of a two-holed cross-surface $\surf$
are parametrized by Margulis invariants of $X,Y,A,B$.  
We start with a general
discussion of the space of affine deformations of the linear holonomy
of a surface and the Margulis invariant.  At the end of the section,
we use the Margulis invariant to provide coordinates for the
deformation space for the two-holed cross-surface.

\subsection{Cocycles}
Let $\surf$ be an arbitrary hyperbolic surface with linear holonomy $\Gamma_0=\rho_0(\pi_1(\surf))\subset\SOTO$.  
Recall 
that an affine deformation of $\Gamma_0$
is a lift $\Gamma_0\xrightarrow{\rho}\Isom(\Eto)$ such that $\rho_0 = \LL\circ \rho$.
Lifts correspond to cocycles, that is, maps
$$
\Gamma_0  \stackrel{\vu}\longrightarrow \rto
$$
satisfying
$$
\vu( XY) \;=\; \vu(X) + \rho_0(X) \vu(Y)
$$
Here the affine deformation is defined by:
$$
p \stackrel{\rho(X)}\longmapsto o+\rho_0(X)(p-o) + \vu(X).
$$
Denote the space of cocycles  $\Gamma_0\rightarrow\rto$ by $\ZZ$.

When $\Gamma_0$ %
is free, as is the case for a two-holed cross-surface, a cocycle is determined by its values on a free basis.
Furthermore, the values on a free basis are completely arbitrary.

Two cocycles determine translationally conjugate affine deformations
if and only if they are {\em cohomologous,\/} that is, they differ
by a coboundary
$$
\vu(X) := \vv - \rho_0(X) \vv
$$
where $\vv\in\rto$ is the vector effecting the translation.
The resulting set of translational conjugacy classes of affine deformations
(cohomology classes of cocycles)
compose the cohomology group $\HH$. 

In the present case, when $\Gamma_0$ is free of rank two,
and $\dim(\rto)=3$, the space 
$\ZZ$ of cocycles is $6$-dimensional.
The space of coboundaries is $3$-dimensional.
Therefore the cohomology $\HH$ has dimension $3$.
 
\subsection{The Margulis invariant}

Continuing with the notation above, the {\em Margulis invariant\/} of the affine deformation $\rho(X)$ is the neutral projection of the translational part $\vu$:
$$
\oa{\vu}{X} \;:=\; \vu \cdot X^0
$$
The Margulis invariant 
is everywhere nonzero if and only if the affine deformation
is free.  Moreover if $\vu,\vv$ are cohomologous then $\oa{\vu}{X}=\oa{\vv}{X}$.  Furthermore, fixing the cocycle $\vu$, the Margulis invariant is a class function on $\Gamma_0$.  

The following basic fact
(Margulis's Opposite Sign Lemma) 
was proved in \cite{Margulis1,Margulis2} 
for hyperbolic elements and extended in
\cite{CharetteDrumm} to groups with parabolic elements. 
The survey article~\cite{Abels} containsa lucid description of the ideas in 
Margulis's original  proof. 
\begin{lemma}\label{lemma:oppsign}
Let $g, h\in\Isom(\Eto)$ be non-elliptic.  
If the Margulis invariants for $g$ and $h$ have 
opposite signs then $\langle g , h \rangle$ does not act properly. 
\end{lemma}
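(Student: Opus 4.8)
The plan is to argue by contradiction: suppose $\Gamma := \langle g,h\rangle$ acts properly while, after possibly interchanging $g$ and $h$, we have $\alpha(g) > 0 > \alpha(h)$, where $\alpha$ denotes the Margulis invariant. Properness means that for every compact $K\subset\Eto$ only finitely many $\gamma\in\Gamma$ satisfy $\gamma K\cap K\neq\emptyset$. Hence it suffices to produce a base point $x_0$ and infinitely many \emph{distinct} elements $w_n\in\Gamma$ whose translates $w_n(x_0)$ all lie in a fixed ball $B$: enlarging $B$ to contain $x_0$ gives $w_n(B)\cap B\neq\emptyset$ for all $n$, contradicting properness. Since $\alpha(g^k)=k\,\alpha(g)$ and properness passes to subgroups, it is enough to contradict properness of $\langle g^2,h^2\rangle$; replacing $g,h$ by $g^2,h^2$ preserves the signs of the invariants and lets me assume each of $g,h$ is a transvection, with positive linear eigenvalues $\lambda^{\pm 1},1$ where $0<\lambda<1$. (The parabolic and mixed cases run along the same lines, using the conventions $\xp{g}=\xm{g}$ and the polynomial contraction estimates of \cite{CharetteDrumm}.)

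First I would record the dynamical normal form. Writing $g(x)=\LL(g)x+\vu$ and decomposing in the eigenbasis $(\xp{g},\xo{g},\xm{g})$ with $\LL(g)\xp{g}=\lambda^{-1}\xp{g}$ and $\LL(g)\xm{g}=\lambda\,\xm{g}$, the fact that $\LL(g)$ preserves the Lorentzian form and fixes $\xo{g}$ yields the \emph{exact} neutral identity
\begin{equation*}
g^{n}(x)\cdot\xo{g} \;=\; x\cdot\xo{g} \,+\, n\,\alpha(g).
\end{equation*}
Summing the geometric series in the transverse directions shows that the $\xp{g}$-component of $g^n(x)$ grows like $\lambda^{-n}$ while the $\xm{g}$-component stays bounded. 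Thus $g$ translates along its invariant axis $C_g$ by $\alpha(g)\,\xo{g}$ per step while contracting transversally toward the lightlike direction $\xp{g}$ under forward iteration and toward $\xm{g}$ under backward iteration; the same holds for $h$, but with $\alpha(h)<0$.

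The heart of the argument is then a ping-pong construction that plays the opposite signs against the transverse contraction. I would fix pairwise disjoint cone neighborhoods of the four lightlike directions $\xpm{g},\xpm{h}$ and choose words $w_n$ that interleave positive powers of $g$ and of $h$ (and their inverses) so that the output of each syllable is fed into the contracting cone of the next; by the estimate above this keeps the lightlike coordinates of $w_n(x_0)$ uniformly bounded. Simultaneously, the neutral identity shows that a $g$-syllable $g^{p}$ shifts the neutral coordinate by $+p\,\alpha(g)>0$ and an $h$-syllable $h^{q}$ shifts it by $q\,\alpha(h)<0$; since these have opposite signs, the exponents can be tuned so that the total neutral drift of $w_n$ also remains bounded. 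The resulting $w_n$ are distinct and keep $x_0$ in a fixed compact set, which is the desired contradiction.

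I expect the last step to be the main obstacle: one must control the exponentially expanding lightlike coordinate and the linearly accumulating neutral coordinate \emph{at the same time}, so the word-lengths forcing neutral cancellation have to be compatible with the contraction bookkeeping of the ping-pong. Reconciling these two requirements — bounding the transverse escape while cancelling the neutral drift — is the technical core, and it is exactly here that the opposite-sign hypothesis is indispensable: with equal signs the neutral displacements would accumulate and force $w_n(x_0)\to\infty$, consistent with (indeed, necessary for) properness.
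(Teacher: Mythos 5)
First, a point of comparison: the paper does not prove this lemma at all --- it is quoted as Margulis's Opposite Sign Lemma, with the hyperbolic case attributed to \cite{Margulis1,Margulis2}, the parabolic extension to \cite{CharetteDrumm}, and an exposition to \cite{Abels}. So you are reconstructing Margulis's argument from scratch. Your high-level strategy (use the opposite signs to manufacture infinitely many distinct words that move something in a fixed compact set a bounded distance) is the right one, your exact neutral identity $g^n(x)\cdot\xo{g}=x\cdot\xo{g}+n\,\alpha(g)$ is correct, and so is the reduction to transvections by squaring.

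However, there are two genuine gaps, and they are not mere bookkeeping to be reconciled later. (1) You propose to keep $w_n(x_0)$ in a fixed ball for a \emph{fixed} base point $x_0$. This cannot work: a hyperbolic syllable $g^p$ multiplies the $\xp{g}$-component of a generic point by $\lambda_g^{-p}$, so $\|g^p h^q(x_0)\|$ grows like $\lambda_g^{-p}\lambda_h^{-q}$ no matter how the syllables are interleaved; feeding each image ``into the contracting cone of the next syllable'' controls the \emph{direction} of the image vector, not its norm. Margulis's argument instead tracks the invariant axis of the composite word $w_n=g^{m_n}h^{k_n}$: one shows (i) $w_n$ is hyperbolic with axis meeting a fixed compact set (this is what the cone/transversality estimates actually deliver), and (ii) points on that axis are displaced by exactly $|\alpha(w_n)|$, which is bounded by the choice of exponents. (2) Your ``neutral drift'' bookkeeping presumes a single neutral coordinate that a $g$-syllable shifts by $p\,\alpha(g)$ and an $h$-syllable by $q\,\alpha(h)$; but $\xo{g}\neq\xo{h}$, and the neutral vector of $w_n$ is yet a third direction. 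The genuine content here is the approximate additivity $|\alpha(\gamma_1\gamma_2)-\alpha(\gamma_1)-\alpha(\gamma_2)|\le C$ for transversally positioned hyperbolics, with $C$ controlled by the separation of the fixed points --- a lemma your sketch silently assumes. Note also that $\alpha(\gamma^{-1})=\alpha(\gamma)$, so inserting inverses produces no cancellation: additivity forces $\alpha(w)\approx\sum_i|p_i|\,\alpha(g)+\sum_j|q_j|\,\alpha(h)$, which is precisely why the opposite-sign hypothesis is indispensable and why your parenthetical ``(and their inverses)'' buys nothing. As written, the proposal is an accurate outline of the known strategy but does not constitute a proof.
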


\subsection{Deformations of hyperbolic structures}

Affine deformations also correspond to 
{\em infinitesimal deformations\/}
of the marked hyperbolic surface $\Sigma \to S$
(
\cite{Goldman_MargulisInvariant,GoldmanMargulis}).
Consider a smooth family $S_t$ of hyperbolic surfaces, with 
smoothly varying markings 
$$
\Sigma \xrightarrow{m_t} S_t
$$ 
and holonomy representation 
$\pi_1(\Sigma)\xrightarrow{\rho_t}\SLtC$.  
Let $\vu$ be the cocycle tangent to the space of representations
at $t=0$ .
By \cite{GoldmanMargulis},
the Margulis invariant identifies with the derivative of the {\em geodesic length function $\ell_\gamma$,\/}
where $\gamma\in\fg$. Specifically, let $\ell_{\gamma(t)}\in\R_+$ denote the length of
the closed geodesic in $S_t$ in the free homotopy class determined by $(m_t)_*(\gamma)\in\pi_1(S)$.
Then:
$$
\oa{\vu}{\rho_0(\gamma)} \;=\;  \frac{d \ell_{\gamma(t)} }{dt} .
$$
Furthermore: 
\begin{equation}\label{eq:Lengths2Traces}
\tr\big( \rho_t(\gamma)\big) \;=\; \pm 2 
\begin{cases}
\cosh\frac{\ell_{\gamma(t)}}{2} &\text{~if~} \gamma 
\text{~preserves orientation} \\
i \sinh\frac{\ell_{\gamma(t)}}{2} &\text{~if~} \gamma 
\text{~reverses orientation} 
\end{cases}
\end{equation}
Differentiating  \eqref{eq:Lengths2Traces} implies:
\begin{equation}\label{eq:dLengths2Traces}
\frac{d}{dt} \tr \big( \rho_t(\gamma) \big)   \;=\; \pm
\begin{cases}
\alpha_\gamma \sinh\frac{\ell_{\gamma(t)}}{2} &\text{~if~} \gamma 
\text{~preserves orientation} \\
i\alpha_\gamma \cosh\frac{\ell_{\gamma(t)}}{2} &\text{~if~} \gamma 
\text{~reverses orientation} 
\end{cases}
\end{equation}

\subsection{Coordinates for the four-sided deformation space}
\label{sec:coordinates}

Fixing $A\in\Gamma_0$, the Margulis invariant is a
linear functional on the space of cocycles $\ZZ$, descending to a
well-defined functional on $\HH$.  To reflect this, we modify our
notation for the Margulis invariant as in~\cite{CDG}:
\begin{align*}
\HH & \xrightarrow{\na_A} \R \\
[\vu] & \longmapsto \oa{\vu}{A}
\end{align*}

Now assume $\surf$ is a two-holed cross-surface, with elements $A,B,X,Y\in\pi_1(\surf)$ as in \S\ref{sec:thx}.  The invariants of elements $A,X,\XM A=Y$ determine an isomorphism of vector spaces:  
\begin{align}\label{eq:CDM-coordinates}
\HH &\xrightarrow{\na} \mathbb{R}^3  \\
 [u]  & \longmapsto
\bmatrix \na_A(u) \\ \na_X(u) \\\na_Y(u) 
\endbmatrix  \notag
\end{align}

Consider a proper affine deformation.  Lemma~\ref{lemma:oppsign}
implies the Margulis invariants of all nontrivial elements have the
same sign.  Namely, consider an infinitesimal deformation of the
hyperbolic structure on $\surf$ such that every closed geodesic 
{\em infinitesimally lengthens} or an infinitesimal deformation where
every closed geodesic \emph{infinitesimally shortens}. 

Differentiating \eqref{eq:trace_identity} yields:
\begin{equation}\label{eq:deriv_trace_identity}
da \, +\, db \,+\, y\ dx \ \,+\, \ x\ dy \;=\; 0.
\end{equation}
Express these quantities using \eqref{eq:trace2length}, 
its derivatives
\eqref{eq:dLengths2Traces},  and use the fact that  
the Margulis invariants $\na_A,\na_B,\na_X,\na_Y$
of $A,B,X,Y$, are
the derivatives $d\lA, d\lB, d\lX, d\lY$  
respectively \cite{GoldmanMargulis,CharetteDrumm}.
The differentiated trace identity 
\eqref{eq:deriv_trace_identity} then implies:


\begin{align}\label{eq:alphaCDMrelations}
&\Big(\sinh\frac{\lA}{2}\Big)   \na_A +  \Big(\sinh\frac{\lB}{2}\Big) \na_B = \\
 &  \qquad\qquad  \Big(2\cosh\frac{\lX}{2} \sinh\frac{\lY}{2}\Big) \na_X \ +\ 
\Big(2 \sinh\frac{\lX}{2} \cosh\frac{\lY}{2}\  \Big) 
\na_Y. \notag  
\end{align}
After eliminating $\na_B$ using \eqref{eq:alphaCDMrelations}, 
the positivity of $\na_A$ and
$\na_B$ is written as follows:
\begin{align}\label{eq:muAmuXmuY}
0 \   & <  \na_A  \\ 
&  < 
  \bigg( 2\csch\frac{\lA}{2} \cosh\frac{\lX}{2} \sinh\frac{\lY}{2} \bigg)
\ \na_X  \notag \\ 
& \qquad + 
\bigg( 2\csch\frac{\lA}{2} \sinh\frac{\lX}{2} \cosh\frac{\lY}{2}\bigg)\ 
\na_Y \notag
\end{align}
Recall that $\Dd$ denotes the set of deformations where the
functionals $\na_X , \na_Y , \na_A , \na_B $ all have the same sign.  Let $\Dd_+\subset\Dd$ denote those whose signs are all positive.  Then the set of lengthening deformations lies inside the
set $\Dd_+$
and is defined in terms of the linear coordinates $\na_A,\na_X,\na_Y$
by \eqref{eq:muAmuXmuY} and the conditions $\na_X > 0$ and $\na_Y > 0$.

$\Dd_+$ is a four-sided cone, and is thus spanned by four rays.
Each of these rays can be defined by
the linear equation \eqref{eq:alphaCDMrelations} together
with one of the four conditions:
\begin{align*}
\na_Y, \na_B > 0,\; &\na_X = \na_A = 0 \\
\na_X, \na_B > 0,\; &\na_Y = \na_A = 0 \\
\na_X, \na_A > 0,\; &\na_Y = \na_B = 0 \\
\na_Y, \na_A > 0,\; &\na_X = \na_B = 0.
\end{align*}
\noindent 
The set of shortening deformations lies inside $\Dd_-$, where 
$$
\Dd_- = -\Dd_+. 
$$ 

\begin{figure}[bh]
\includegraphics[scale=2.3]{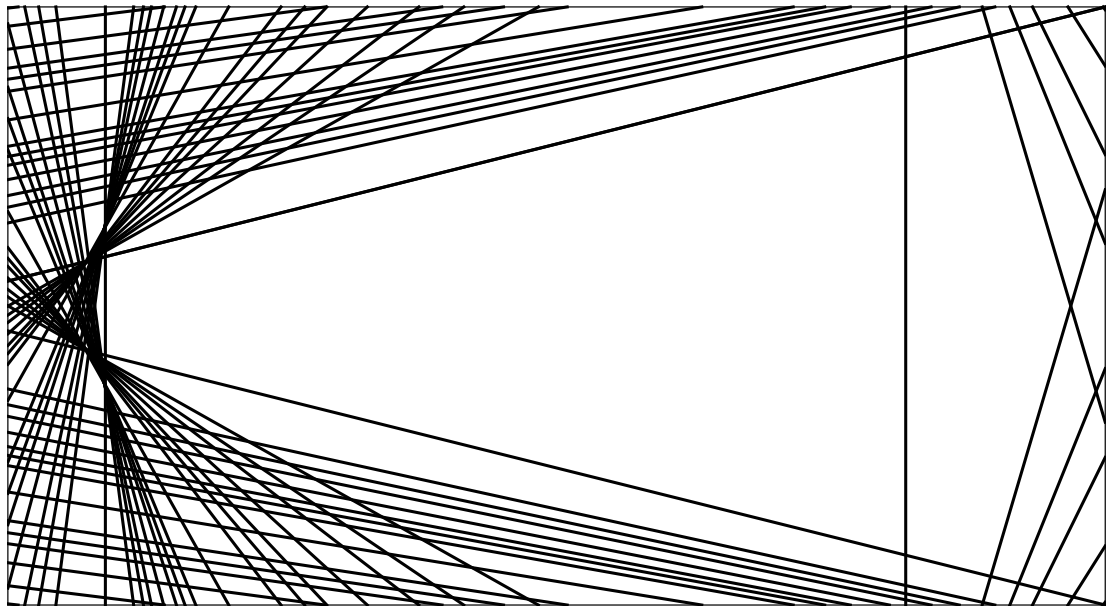}
\caption{The four-sided deformation space
for a two-holed cross-surface}
\label{fig:DeformationSpace}
\end{figure}

\section{Crooked fundamental domains}

In this final section we prove the Main Theorem stated in the
Introduction.  Explicitly, we show that every proper affine
deformation admits a crooked fundamental domain bounded by four
disjoint crooked planes.  It follows that $\Eto/\Gamma$ is a solid
handlebody of genus two.

As in the case of the three-holed sphere~\cite{CDG}, 
it suffices
to consider all affine deformations arising from a single
configuration of crooked planes. This configuration is modeled
on the fundamental domain for $S$ developed in 
\S\ref{sec:funddomainsHypSurf}. 
These crooked plane configurations correspond to 
decompositions of $S$ into two ideal triangles  bounded by three different geodesics. Some of these crooked
plane configurations also describe the entire space of proper affine deformations. 
The more complicated
case of the one-holed torus requires the use of multiple configurations
of crooked planes to cover all the proper affine deformations~\cite{CDGp}.
That is why we obtain a finite-sided
deformation space, in contrast with the case of the one-holed torus,
for example.

We provide definitions and some background on crooked planes in the Appendix.

\subsection{Configuring crooked planes}\label{sec:CrookedFD}

Given a spacelike vector $\vv\in\rto$ and $p\in\Eto$, let $\CP(\vv,p)$
denote the crooked plane with direction vector $\vv$ and vertex $p$.
Its complement in $\Eto$ is a pair of crooked halfspaces.

As one-dimensional spacelike subspaces of $\rto$ bijectively
correspond to hyperbolic lines in $\hyp$, so do parallelism classes of
crooked planes.  Thus to any configuration of lines in $\hyp$ we can
associate a corresponding configuration of crooked planes, up to
choice of vertices.  In particular, every ideal triangle in $\hp$
corresponds to a triple of crooked planes such that every pair of
direction vectors shares a common orthogonal lightlike line.  Moreover
such a triple determines a triple of pairwise disjoint crooked
halfspaces, for appropriate choices of vertices.

Crooked planes enjoy the following useful property:
orientation-preserving Lorentzian isometries map crooked planes to
crooked planes.  Explicitly, if $g\in\SOTO$ then:
$$
g\big(\CP(\vv,p)\big)=\CP\big(\LL(g)(\vv),g(p)\big).$$ 
Moreover, if these crooked
planes are disjoint, then there exists a crooked halfspace $\mathcal{H}$ in the
complement of $\CP(\vv,p)$ such that the closures of $\mathcal{H}$ and $g(\mathcal{H})$
are disjoint.  Thus crooked planes are suitable for Klein-Maskit
combination arguments, {\em even for groups containing  glide-reflections}.

\subsection{Disjointness}

Start with four crooked planes all with the same vertex and
corresponding to the ideal quadrilateral $\idealquad$ pictured in
Figure~\ref{fig:IdealQuadrilateral}.  We  associate vertices to
each crooked plane which make them disjoint.  This assignment of
vertices yields a proper affine deformation of $\Gamma_0=\langle
A,X\rangle$.

We introduce 
a convention 
on 
lightlike vectors.  Recall that glide-reflections in $\Isom(\hp)$
identify with isometries in $\SOTO$ which interchange the future and
past null cones.  To facilitate calculations, take all lightlike vectors
to be future-pointing. If $X$ corresponds to a
glide-reflection and $\vv$ is a future-pointing vector then write all
expressions involving the action of $X$ on $\vv$ in terms of $X(-\vv)$
which is again future-pointing.

As in~\cite{CDG}, it suffices to consider a triple of crooked planes
corresponding to one of the ideal triangles forming $\idealquad$.  Let
$\vv_0$, $\vv_A$, $\vv_X$ be a triple of consistently oriented
unit-spacelike vectors such that:
\begin{align*}
\vv_0^\perp & = \langle\xm{A},X(-\xp{B})\rangle \\
\vv_A^\perp & = \langle A(\xp{B}),\xm{A}\rangle \\
\vv_X^\perp & = \langle X(-\xp{B}),A(\xp{B}) \rangle.
\end{align*}
These will be the directing vectors of our triple of crooked planes.  

Given $(p_0,p_A,p_X)\in \Eto \times \Eto \times \Eto$, 
define the cocycle 
$\vu_{(p_0,p_A,p_X)} \;\in\; \ZZ
$ 
as follows:

\begin{align*}
\vu_{(p_0,p_A,p_X)}(A) &:=  p_A - p_0  \\
\vu_{(p_0,p_A,p_X)}(X) &:=  p_X - p_0.
\end{align*}
Since $A, X$ freely generate $\fg$, 
these conditions uniquely determine the cocycle $\vu_{(p_0,p_A,p_X)}$.

This cocycle corresponds to the following triple of crooked planes: 
\begin{align*}
\CP_0 & := \;
\CP(\vv_0 ,p_0) \\
\CP_A & := \;
\CP(\vv_A , p_A) \\
\CP_X & := \;
\CP(\vv_X,  p_X).
\end{align*}
If $\rho$ is the affine deformation corresponding to
$\vu_{(p_0,p_A,p_X)}$, then $\CP_0$ bounds a crooked halfspace whose
closure contains $\rho(A)^{-1}(\CP_A)$ and $\rho(X)^{-1}(\CP_X)$.
Note that the quadruple
$$ 
\CP_A,\CP_X,\rho(A)^{-1}(\CP_A),\rho(X)^{-1}(\CP_X) 
$$ 
corresponds to $\idealquad$.

We would like these crooked planes to be disjoint.  
To this end, consider the three quadrants:
\begin{align*}
\Q_0 & :=\; \Quad{\xm{A}, -X(-\xp{B})}  \\
\Q_A & :=\; \Quad{A(\xp{B}), -\xm{A}} \\
\Q_X & :=\; \Quad{X(-\xp{B}),- A(\xp{B})} \\
\end{align*}
By Lemma~\ref{lem:disjointasym}, the triple of crooked planes $\CP_0,\CP_A ,\CP_X$ are pairwise disjoint as long as $(p_0,p_A,p_X)\in \Q_0 \times \Q_A \times \Q_X$.

\begin{thm}\label{thm:tame}
Let 
$$
(p_0,p_A,p_X)\in \Q_0 \times \Q_A \times \Q_X.
$$
Then the affine deformation defined by
$$
\vu_{(p_0,p_A,p_X)} \;\in\; \ZZ
$$
is tame.
\end{thm}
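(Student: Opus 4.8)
The goal is to construct, for every choice of $(p_0,p_A,p_X)\in\Q_0\times\Q_A\times\Q_X$, a fundamental polyhedron for the affine group $\Gamma=\rho(\fg)=\langle\rho(A),\rho(X)\rangle$ bounded by crooked planes. Since possessing such a polyhedron is the definition of tameness, and since the combination argument producing it simultaneously yields proper discontinuity, this will prove the theorem. The four crooked planes that should bound the polyhedron are
$$
\CP_A,\qquad \CP_X,\qquad \rho(A)^{-1}(\CP_A),\qquad \rho(X)^{-1}(\CP_X),
$$
paired by the side-pairings $\rho(A)\colon\rho(A)^{-1}(\CP_A)\mapsto\CP_A$ and $\rho(X)\colon\rho(X)^{-1}(\CP_X)\mapsto\CP_X$; the diagonal plane $\CP_0$ enters only as an auxiliary separating surface. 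The plan is to assemble pairwise disjointness of these four planes and then to apply the Klein--Maskit combination theorem for crooked planes \cite{DrummGoldman_crooked}.

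First I would invoke Lemma~\ref{lem:disjointasym}: the hypothesis $(p_0,p_A,p_X)\in\Q_0\times\Q_A\times\Q_X$ makes $\CP_0,\CP_A,\CP_X$ pairwise disjoint. These three planes direct the sides of the ideal triangle with vertices $\xm{A},A(\xp{B}),X(\xp{B})$, the plane $\CP_0$ directing the diagonal $\big(\xm{A},X(\xp{B})\big)$ of $\idealquad$; in particular $\CP_A$ and $\CP_X$ are disjoint. For disjointness between the near pair $\{\CP_A,\CP_X\}$ and the far pair $\{\rho(A)^{-1}(\CP_A),\rho(X)^{-1}(\CP_X)\}$, I would use the separating property recorded in the construction preceding the theorem: $\CP_0$ bounds a crooked halfspace whose closure contains both $\rho(A)^{-1}(\CP_A)$ and $\rho(X)^{-1}(\CP_X)$, while $\CP_A$ and $\CP_X$ lie in the complementary crooked halfspace. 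Hence everything in the near pair is disjoint from everything in the far pair.

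The remaining, and I expect hardest, point is disjointness within the far pair, namely that $\rho(A)^{-1}(\CP_A)$ and $\rho(X)^{-1}(\CP_X)$ are themselves disjoint. Applying the image formula $g\,\CP(\vv,p)=\CP(\LL(g)\vv,g(p))$, these planes have direction vectors $\rho_0(A)^{-1}\vv_A$ and $\rho_0(X)^{-1}\vv_X$, and direct the two non-diagonal sides $\big(\xm{A},\xp{B}\big)$ and $\big(\xp{B},X(\xp{B})\big)$ of the complementary ideal triangle with vertices $\xm{A},\xp{B},X(\xp{B})$; here the second side is computed from $A(\xp{B})=X^2(\xp{B})$, so that $X^{-1}A(\xp{B})=X(\xp{B})$. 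These two sides meet at the ideal vertex $\xp{B}$, so their direction vectors share a common orthogonal lightlike line, and my approach is to apply Lemma~\ref{lem:disjointasym} a second time to this complementary triangle, the relevant vertices now being the translates $\rho(A)^{-1}(p_A)$ and $\rho(X)^{-1}(p_X)$. The work is to verify that these translated vertices fall into the quadrants demanded by the lemma for the complementary triangle. Throughout, the glide-reflection $X$ must be handled with care: since $X$ interchanges the null cones, all lightlike data are taken future-pointing and the action of $X$ on a future-pointing $\vv$ is expressed through $X(-\vv)$, as in the stated convention, so that the image formula applies correctly to this time-orientation-reversing element.

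With all four crooked planes, hence their four bounding crooked halfspaces, pairwise disjoint, I would conclude by the Klein--Maskit combination theorem. The property cited in \S\ref{sec:CrookedFD}, that disjoint crooked planes bound crooked halfspaces $\mathcal{H}$ with $\overline{\mathcal{H}}\cap\overline{g(\mathcal{H})}=\varnothing$, supplies exactly the ping-pong data for the pairing isometries $\rho(A)$ and $\rho(X)$. Combining the cyclic factors $\langle\rho(A)\rangle$ and $\langle\rho(X)\rangle$ then shows that $\Gamma$ acts properly and freely on $\Eto$, with fundamental domain the complement of the four crooked halfspaces. This polyhedron is bounded by crooked planes, so the affine deformation defined by $\vu_{(p_0,p_A,p_X)}$ is tame.
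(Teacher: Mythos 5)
Your overall framing --- reduce to pairwise disjointness of the quadruple $\CP_A,\,\CP_X,\,\rho(A)^{-1}(\CP_A),\,\rho(X)^{-1}(\CP_X)$, separate the near pair from the far pair by the auxiliary plane $\CP_0$, and finish with the Klein--Maskit combination theorem --- matches the intended argument. The gap is in your treatment of the far pair. You propose to prove that $\rho(A)^{-1}(\CP_A)$ and $\rho(X)^{-1}(\CP_X)$ are disjoint by a second application of Lemma~\ref{lem:disjointasym}, after ``verifying that the translated vertices fall into the quadrants demanded by the lemma.'' That verification cannot succeed, because the claim is false in general: for $(p_0,p_A,p_X)\in\Q_0\times\Q_A\times\Q_X$ these two crooked planes may genuinely intersect, sharing a wing parallel to the common ideal vertex $\xp{B}$ --- this is exactly the configuration depicted in Figure~\ref{fig:FourwTwoKiss}. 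Concretely, the translated vertices are $\rho(A)^{-1}(p_A)=A^{-1}(p_0)$ and $\rho(X)^{-1}(p_X)=X^{-1}(p_0)$; writing $p_0=r_0\xm{A}-s_0X(-\xp{B})$ one finds that $A^{-1}(p_0)$ is a positive multiple of $\xm{A}$ minus $s_0\,\bar{Y}(-\xp{B})$, and $\bar{Y}(-\xp{B})$ is not parallel to $\xp{B}$ (the point $\xp{B}$ is not fixed by $Y$), whereas Lemma~\ref{lem:disjointasym} requires the vertex to lie in the quadrant spanned by the two null directions $\xm{A}$ and $\xp{B}$ orthogonal to the direction vector $A^{-1}\vv_A$. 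So the hypotheses of the lemma are simply not met, and no amount of checking will place the vertices where your argument needs them.

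The paper's proof takes a different and necessary step here: it applies the Kissing Lemma (Lemma~\ref{lem:kissing1}) to the offending configuration. That lemma does not show the given planes are disjoint; it \emph{replaces} the vertices of the kissing planes by new points $q_1,q_2$ so that the resulting quadruple is pairwise disjoint while still being paired by the \emph{same} isometries $\rho(A)$ and $\rho(X)$. Since the group, and hence the affine deformation, is unchanged, Theorem~\ref{thm:disjointCPs} then yields a crooked fundamental domain and tameness. To repair your proof you must either incorporate this perturbation step or establish a disjointness criterion strictly stronger than Lemma~\ref{lem:disjointasym}; as written, the argument breaks at the far pair.
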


\begin{proof}
Apply the Kissing Lemma (Lemma~\ref{lem:kissing1}) to the crooked planes 
in the halfspace bounded by $\CP(\vv_0,p_0)$, to obtain pairwise disjoint crooked planes.  By Theorem~\ref{thm:disjointCPs}, they bound a fundamental domain.
\end{proof}

Figure~\ref{fig:FourwTwoKiss} shows a quadruple of crooked planes, to which we apply the Kissing Lemma to obtain pairwise disjoint crooked planes. 

\begin{figure}
\includegraphics[width=6.3in]{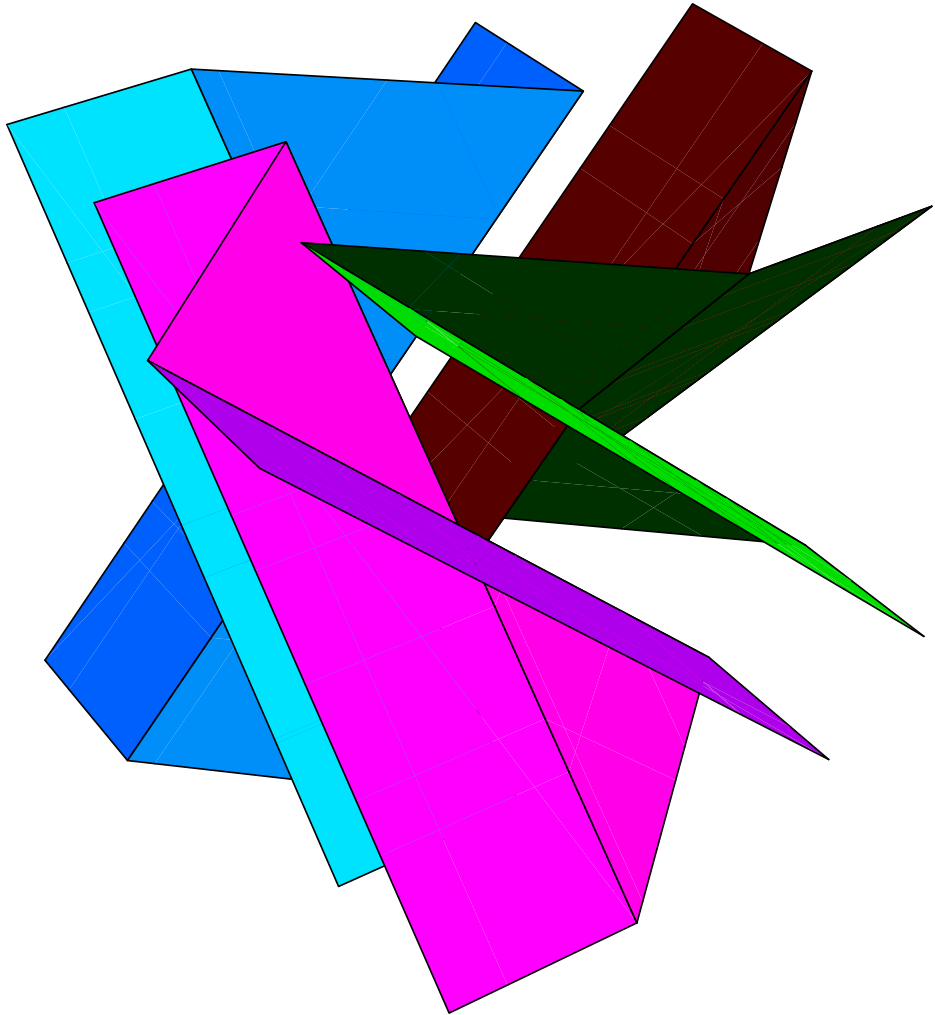} 
\caption{Four crooked planes.  The two on the right share a wing.}
\label{fig:FourwTwoKiss} 
\end{figure}

\subsection{Proper deformations are tame}
Now we conclude the proof.  Recall that all tame deformations are
proper.  The Margulis invariants  of the four
elements $X,Y,A,B$ are all of the same sign for every proper
deformation.  That is,
$$
\Ta \; \subset \; \Pr \; \subset \; \Dd .
$$
We now show that $\Dd \; \subset \; \Ta$ .

As before, Lemma~\ref{lemma:oppsign} implies that each of these 
sets has two connected components which are opposites of each other. 
For proper deformations, 
$$
\Pr= \Pr_+\cup \Pr_-
$$ 
and $\Pr_-= - \Pr_+$. 
For tame deformations, 
$$
\Ta= \Ta_+ \cup \Ta_-
$$ 
and $\Ta_- = - \Ta_+$. 
Thus 
\begin{align*}
\Ta_+   & \subset \Pr_+  \subset \Dd_+ \\
\Ta_-   & \subset \Pr_-  \subset \Dd_- 
\end{align*}
Theorem~\ref{thm:tame} asserts that the image of the mapping
\begin{align*}
\Q_0 \times \Q_A \times \Q_X &\longrightarrow \HH \\
(p_0,p_A,p_X) &\longmapsto [\vu_{(p_0,p_A,p_X)}]
\end{align*}
lies in $\Ta$.

In fact we shall show that this image is exactly
$\Dd_+$. The proof for the positive components 
implies 
the same statements for the negative components.

We will in fact consider cohomology classes in the closure of $\Ta_+$.  Write:
\begin{equation}\label{eq:vertexdef}
\begin{array}{rcl}
p_0 &:= &r_0 \xm{A} - s_0 X(-\xp{B}) \\
p_A &:= &r_A A(\xp{B}) - s_A \xm{A}  \\
p_X &:=& r_X X(-\xp{B}) - s_X A(\xp{B}) 
\end{array}
\end{equation}
where 
$$
r_0, s_0, r_A, s_A, r_X, s_X \geq 0.
$$
Assigning a zero value to any of these coefficients yields a configuration of 
{\em kissing\/} crooked planes, any two of  which intersect in a point, 
a ray or a halfplane.

Applying the definition of a cocycle:
\begin{lemma}\label{lem:yb}
For any cocycle $\vu$,
\begin{itemize}
\item $\vu(Y) \;=\; \bar{X} \big(\vu(A) - \vu(X)\big)$;
\item $\vu(B) \;=\; \bar{Y} \big(\vu(X) - \vu(Y)\big)$.
\end{itemize}\end{lemma}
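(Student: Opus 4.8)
The plan is to verify both identities directly from the cocycle condition, using the relations among $A,B,X,Y$ established in the presentation \eqref{eq:redundant}. Recall that a cocycle $\vu\colon\Gamma_0\to\rto$ satisfies $\vu(gh)=\vu(g)+\rho_0(g)\vu(h)$, and from \eqref{eq:redundant} we have $A=XY$ and $B=\bar Y X$. Since $\{X,Y\}$ freely generates $\fg$, the cocycle is determined by its values on $X$ and $Y$, and the goal is to re-express $\vu(Y)$ and $\vu(B)$ in terms of $\vu(A),\vu(X)$. Throughout I suppress $\rho_0$, writing $X$ for $\rho_0(X)$ when it acts on a vector.

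First I would establish the formula for $\vu(Y)$. Starting from $A=XY$, the cocycle condition gives $\vu(A)=\vu(X)+X\,\vu(Y)$. Solving for $\vu(Y)$ yields $X\,\vu(Y)=\vu(A)-\vu(X)$, hence $\vu(Y)=\bar X\big(\vu(A)-\vu(X)\big)$, which is precisely the first claimed identity. The only subtlety is the need for the cocycle value of an inverse; but this follows from applying the cocycle condition to $X\bar X=1$ together with $\vu(1)=0$, giving $\vu(\bar X)=-\bar X\,\vu(X)$, and more generally justifying that $\bar X$ acts linearly on $\rto$ via $\rho_0(\bar X)=\rho_0(X)^{-1}$.

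Next I would handle $\vu(B)$. From $B=\bar Y X$, the cocycle condition gives $\vu(B)=\vu(\bar Y)+\bar Y\,\vu(X)$. Using $\vu(\bar Y)=-\bar Y\,\vu(Y)$ as above, this becomes $\vu(B)=-\bar Y\,\vu(Y)+\bar Y\,\vu(X)=\bar Y\big(\vu(X)-\vu(Y)\big)$, which is the second claimed identity.

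I do not expect any real obstacle here: the result is a formal consequence of the cocycle identity and the word relations $A=XY$, $B=\bar Y X$ from \eqref{eq:redundant}. The only point requiring care is the correct handling of cocycle values on inverses, namely $\vu(\bar W)=-\bar W\,\vu(W)$, which should be recorded explicitly before the two main computations. Since both identities are pure algebra in the $\Gamma_0$-module $\rto$, no geometric input is needed.
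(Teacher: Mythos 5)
Your proof is correct and follows exactly the route the paper intends: the paper offers no written proof beyond the phrase ``Applying the definition of a cocycle,'' and your computation (using $A=XY$, $B=\bar Y X$ together with $\vu(\bar W)=-\bar W\,\vu(W)$) is precisely that verification, spelled out. No issues.
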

\noindent 
Consequently:
\begin{equation}\label{eq:udef} 
\begin{array}{rcl}
\vu(A)   & = &  p_A - p_0 \\
\vu(X)	 & = &  p_X - p_0  \\
\vu(Y) 	 & = & \bar{X}(p_A - p_X) \\
\vu(B)	 & = &  \bar{Y}(p_X - p_0)- \bar{A}( p_A - p_X )  
\end{array}
\end{equation}
Therefore:
\begin{equation}\label{eq:margulisoftriple}
\na([\vu_{(p_0,p_A,p_X)}])=
\begin{bmatrix}
(r_A\xp{B}+s_0X(-\xp{B}))\cdot A^0 \\
(-(r_X+s_0)\xp{B}-s_XA(\xp{B})-r_0\xm{A})\cdot X^0\\
((r_A+r_X+s_X)\xp{B}-s_AX^{-1}(\xm{A}))\cdot Y^0
\end{bmatrix}
\end{equation}

\noindent
The following lemma is also immediate:
\begin{lemma}\label{lem:alphasXYA}
Let $(p_0,p_A,p_X)\in \Q_0 \times \Q_A \times \Q_X$ be as above.
\begin{itemize}
\item 
$\vu_{(p_0,0,0)}(Y) \;=\; 0$ so 
$\na_Y(\vu_{(p_0,0,0)}) \;=\; 0$.
\item 
$\vu_{(0,p_A,0)}(X) \;=\; 0$ so 
$\na_X(\vu_{(0,p_A,0)}) \;=\; 0$.
\item 
$\vu_{(0,0,p_X)}(A) \;=\; 0$ so 
$\na_A(\vu_{(0,0,p_X)}) \;=\; 0$.
\end{itemize}
\end{lemma}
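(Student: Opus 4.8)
The plan is to read each of the three assertions directly off the cocycle formulas \eqref{eq:udef}, combined with the fact that the Margulis invariant is simply a neutral projection. Since the authors flag this lemma as immediate, essentially no work beyond substitution is needed: the first step is to recall that $\vu_{(p_0,0,0)}$ denotes the cocycle obtained from \eqref{eq:udef} by placing both $p_A$ and $p_X$ at the origin $\origin$, that $\vu_{(0,p_A,0)}$ places $p_0$ and $p_X$ at $\origin$, and that $\vu_{(0,0,p_X)}$ places $p_0$ and $p_A$ at $\origin$. The relevant formulas from \eqref{eq:udef} are $\vu(A)=p_A-p_0$, $\vu(X)=p_X-p_0$, and $\vu(Y)=\bar{X}(p_A-p_X)$.

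Next I would treat the three bullets in turn. For the first, substituting $p_A=p_X=\origin$ into $\vu(Y)=\bar{X}(p_A-p_X)$ yields $\vu_{(p_0,0,0)}(Y)=\bar{X}(\origin-\origin)=\origin$; here $\bar{X}$ denotes the linear part $\rho_0(X)^{-1}$ acting on translation vectors, so it genuinely sends the zero vector to the zero vector. For the second, substituting $p_0=p_X=\origin$ into $\vu(X)=p_X-p_0$ gives $\vu_{(0,p_A,0)}(X)=\origin-\origin=\origin$. For the third, substituting $p_0=p_A=\origin$ into $\vu(A)=p_A-p_0$ gives $\vu_{(0,0,p_X)}(A)=\origin-\origin=\origin$.

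Finally, recalling that the Margulis invariant is the neutral projection $\na_K([\vu])=\vu(K)\cdot K^0$, the vanishing of each cocycle value forces $\na_Y$, $\na_X$, and $\na_A$ (respectively) to vanish as well, since $\origin\cdot K^0=0$. There is no real obstacle to overcome; the entire content is a substitution into \eqref{eq:udef}, which is exactly why the statement is ``immediate.'' The only point deserving a moment's attention is the linearity of $\bar{X}$ in the first case, which is what guarantees $\bar{X}(\origin)=\origin$ rather than merely producing a fixed point of the associated affine transformation.
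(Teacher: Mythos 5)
Your proposal is correct and matches the paper's (implicit) argument exactly: the paper states the lemma is immediate from the formulas in \eqref{eq:udef}, and your substitutions $\vu(Y)=\bar{X}(p_A-p_X)$, $\vu(X)=p_X-p_0$, $\vu(A)=p_A-p_0$ with the appropriate points set to the origin, followed by the neutral projection $\na_K([\vu])=\vu(K)\cdot K^0$, are precisely that computation. The remark about linearity of $\bar{X}$ (acting via $\rho_0$ on translation vectors, not as an affine map) is a correct and worthwhile clarification.
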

%

The deformation space $\Dd_+$ is the (open) convex hull of four rays.
%
We will exhibit a cocycle on each ray and show that its $\mu$-coordinates are all nonnegative. 

Figure~\ref{fig:FuturePointing} serves as a visual aid in the
calculation of $\mu$-coordinates, by showing where the relevant
future-pointing vectors are located relative to the axes of $X$ and $Y$. 

%
\begin{figure}[h]
\psfrag{m}{$X^-$}
\psfrag{V}{$X^+$}
\psfrag{Z}{$Y^-$}
\psfrag{W}{$Y^+$}
\psfrag{X}{$X$}
\psfrag{Y}{$Y$}
\psfrag{A}{$A$}
\psfrag{B}{$B$}
\psfrag{P}{$A^Y\,=\,A^{\bar{X}}$}
\psfrag{Q}{$B^Y\,=\,B^X$}
\psfrag{o}{$\dev(\tbasepoint)$}
\includegraphics[scale=0.6] 
{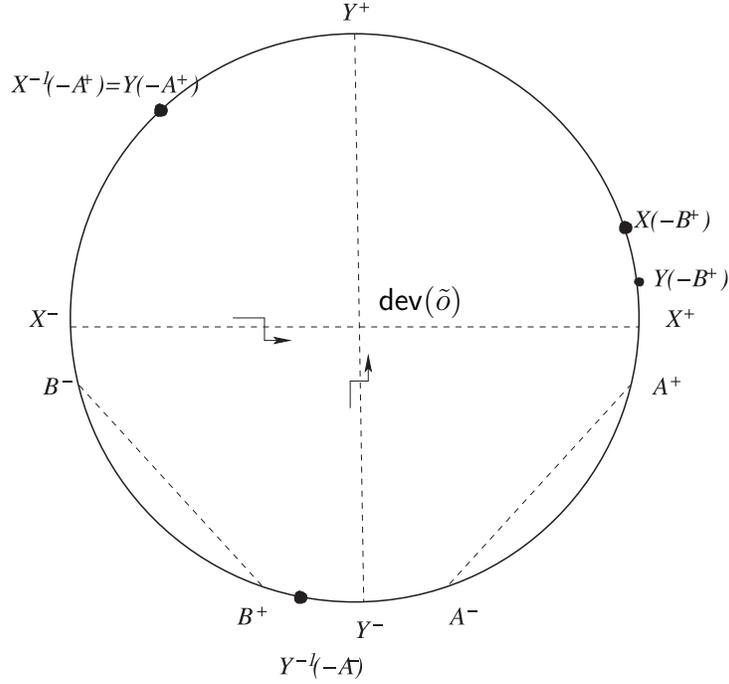}
\caption{Computing cocycles on the edge of $\Dd_+$. The locations of relevant future-pointing vectors are shown.  The figure depicts the case when $A,B$ are hyperbolic; in the case where either isometry is parabolic, the corresponding invariant axis shrinks to a point.}
\label{fig:FuturePointing}
\end{figure}

\bigskip
\noindent{{\bf First ray:~}$\na_B^{-1}(0)\cap\na_Y^{-1}(0)$}

\smallskip
\noindent
For $\na_B =\na_Y  =  0$, 
let $(p_0,p_A,p_X)  =  (-s_0 X(-\xp{B}), 0, 0)$ with $s_0>0$.
\begin{equation*}
\begin{array}{rcl}
\vu(A)   & = &  s_0 X(-\xp{B}) \\
\vu(X)	 & = &  s_0 X(-\xp{B}) \\
\vu(Y) 	 & = & \mathsf{0} \\
\vu(B)	 & = &  \bar{Y}(s_0 X(-\xp{B})) = -s_0\xp{B}
\end{array}
\end{equation*}

\noindent
Because $\xp{B}\cdot \xo{B} = 0$, we know that
$\na_Y(\vu)=\na_B(\vu)=0$. Recall that $X$ is a glide
reflection, so that the future pointing vector $X(-\xp{B})$ lies
above   
the axis of $X$ in
Figure~\ref{fig:FuturePointing}. Therefore, $X(-\xp{B}) \cdot
\xo{X}>0$ and $X(-\xp{B}) \cdot \xo{A}>0 $, so that
$$
\na(A) =s_0 X(-\xp{B}) \cdot  \xo{A} >0
$$
and
$$
\na(X) =s_0 X(-\xp{B}) \cdot  \xo{X} >0.
$$

\bigskip
\noindent{{\bf Second ray:~}$\na_B^{-1}(0)\cap\na_X^{-1}(0)$}

\smallskip

\noindent
For $\na_B = \na_X  =  0$, let 
$(p_0,p_A,p_X)  =  (0, r_A A(\xp{B}), 0)$ with $r_A >0$.
\begin{equation*}
\begin{array}{rcl}
\vu(A)   & = &  r_A A(\xp{B}) \\
\vu(X)	 & = &  \mathsf{0} \\
\vu(Y) 	 & = & \bar{X}(r_A A(\xp{B})) =  - r_A Y(-\xp{B}) \\
\vu(B)	 & = &  -\bar{A}(r_A A(\xp{B})) = -r_A \xp{B}
\end{array}
\end{equation*}
Clearly  $\na_X(\vu)=\na_B(\vu)=0$. The future pointing 
vector $Y(-\xp{B})$ lies to the right of the axis of $Y$ in 
Figure~\ref{fig:FuturePointing},  so that
$Y(-\xp{B}) \cdot  \xo{Y}<0$. Because $\xp{B} \cdot  \xo{A}>0$, then $A(\xp{B}) \cdot  \xo{A}>0$.
Thus, 
$$
\na(A) =r_A A(\xp{B}) \cdot  \xo{A} >0 
$$
and
$$
\na(Y) =- r_A Y(-\xp{B})  \cdot  \xo{Y} >0 .
$$

\bigskip
\noindent{{\bf Third ray:~}$\na_A^{-1}(0)\cap\na_X^{-1}(0)$

\smallskip
\noindent
For $\na_A =\na_X  =  0$, let 
$(p_0,p_A,p_X)  =  (0, -s_A \xp{A}, 0)$ with $s_A>0$.
\begin{equation*}
\begin{array}{rcl}
\vu(A)   & = &  -s_A \xp{A}\\
\vu(X)	 & = &  \mathsf{0} \\
\vu(Y) 	 & = & \bar{X}(-s_A \xp{A})\ =\  s_A \bar{X}(-\xp{A}) \\
\vu(B)	 & = &  -\bar{A}(-s_A \xp{A})\ =\  s_A \xp{A}
\end{array}
\end{equation*} 
It is clear that $\na_A(\vu)=\na_X(\vu)=0$. Because $\xp{A}$ is a fixed point
of $A = XY$, the future pointing vector 
$\bar{X}(-\xp{A})=Y(-\xp{A})$ lies to the left of the axis for $Y$ so that
$\bar{X}(-\xp{A}) \cdot  \xo{Y}>0$. It is also evident that $\xp{A} \cdot  \xo{B}>0$.
Thus
$$
\na(Y) =s_A \bar{X}(-\xp{A})  \cdot  \xo{Y} >0
$$
and
$$
\na(B) =- s_A \xp{A}  \cdot  \xo{B} >0 
$$

\newpage
\noindent{{\bf Fourth ray:~}$\na_A^{-1}(0)\cap\na_Y^{-1}(0)$}

\smallskip

\noindent
For $\na_A\;=\;\na_Y \; = \; 0$, let  
$(p_0,p_A,p_X) \; = \; (r_0 \xm{A}, 0, 0)$ with $r_0>0$.
\begin{equation*}
\begin{array}{rcl}
\vu(A)   & = &  -r_0 \xm{A}\\
\vu(X)	 & = &  -r_0 \xm{A} \\
\vu(Y) 	 & = & \mathsf{0} \\
\vu(B)	 & = &  \bar{Y}(-r_0 \xm{A})\ =\  r_0\bar{Y}(-\xm{A}) .
\end{array}
\end{equation*}
Certainly $\na_A(\vu)=\na_Y(\vu)=0$. It is also clear that 
$\xm{A} \cdot  \xo{X}<0$.  

In order to locate $\bar{Y}(-\xm{A})$, observe that $\xm{A}$ is inside the
circular arc from $X(-\xp{B})$ clockwise to  $\xm{Y}$ 
(see Figure~\ref{fig:IdealQuadrilateral}). 
Because $B=\bar{Y}X$,  $X(-\xp{B}) = Y(-\xp{B})$ and $Y$ maps the arc from
$X(-\xp{B})$ clockwise to  $\xm{Y}$  to the arc from $\xm{Y}$ to $\xp{B}$. 
In particular, $\bar{Y}(-\xm{A}) \cdot  \xo{B}>0$ 
so that: 
$$
\na(X) =-r_0 \xm{A}  \cdot  \xo{X} >0
$$
and
$$
\na(B) =r_0\bar{Y}(-\xm{A}) \cdot  \xo{B} >0 
$$

Therefore, every cohomology class in the interior of the convex hull of these four rays is represented by a cocycle $\vu_{(p_0,p_A,0)}$ such that 
$$
r_0, s_0, r_A, s_A>0.
$$  
This only yields a configuration of crooked planes where $\CP_0$ and $\CP_A$ are disjoint, but each kisses $\CP_X$.  However, 
Equation~\eqref{eq:margulisoftriple} implies that by slightly perturbing $s_A$ and $r_0$, we can make $r_X$ and $s_X$ both strictly positive within the same cohomology class.  Finally, since the new values $(p'_0,p'_A,p_X)$ are in $\Q_0 \times \Q_A \times \Q_X$,  $[\vu_{(p_0,p_A,0)}]\in\Ta_+$ and thus $\Dd_+\subset\Ta_+$ as claimed.
\qed

\appendix
\section{Some facts about crooked planes}

We present here some relevant definitions and facts about crooked planes and the fundamental domains they bound.   We refer the reader to~\cite{Drumm_thesis,DrummGoldman_crooked} for proofs and background, as well as \cite{CDG}, where we first considered triples of crooked planes.

Let $p\in\Eto$ be a point and $\vv\in\rto$ a spacelike vector.
Define the {\em crooked plane}
$\CP(\vv,p)\subset\Eto$ with {\em vertex\/} $p$ and
{\em direction vector\/} $\vv$
to be the union of two {\em wings}
\begin{align*}
& p +\R\xp{\vv}+\R_+\vv^0\\
& p +\R\xm{\vv}-\R_+\vv_0
\end{align*}
and a {\em stem}
\begin{equation*}
p +\  \{\vx\in\rto \mid\ \ldot{\vv}{\vx} = 0,
\ldot{\vx}{\vx} \le 0 \} .
\end{equation*}

\begin{defn}
Let $\vv$ be a spacelike vector and $p\in\Eto$.
The {\em crooked halfspace} determined by $\vv$ and $p$,
denoted $\H(\vv,p)$, consists of all $q\in\Eto$ such that:
\begin{itemize}
\item $\ldot{(q-p)}{\xp{\vv}}\leq0$ if $\ldot{(q-p)}{\vv}\geq 0$;
\item $\ldot{(q-p)}{\xm{\vv}}\geq0$ if $\ldot{(q-p)}{\vv}\leq 0$;
\item  either condition must hold for $q-p\in\vv^\perp$.
\end{itemize}
\end{defn}

Observe that while $\CP(\vv,p)=\CP(-\vv,p)$, the crooked halfspaces $\H(\vv,p)$ and $\H(-\vv,p)$ are distinct components of the complement of $\CP(\vv,p)$.

Crooked planes serve to prove a Klein-Maskit combination theorem for free Lorentzian groups.  The following theorem, proved for subgroups of $\soto$, extends to groups containing glide-reflections.

\begin{thm}\cite{Drumm_thesis}\label{thm:disjointCPs}
Suppose that
$\H(\vv_i,p_i)$ are $2n$ pairwise disjoint crooked halfspaces and $\g_1, \ldots \g_n$ are non-elliptic elements such that for
all $i \in \{\pm 1,\dots,\pm n\}$,
\begin{equation*}
\g_i\big(\H(\vv_{-i},p_{-i})\big) \;=\;
\Eto\setminus \mathsf{int} \big(\H(\vv_{i},p_i)\big).
\end{equation*}
Then
$\G=\langle \g_1, \ldots \g_n \rangle$ acts freely and properly on
$\Eto$ with fundamental domain
\begin{equation*}
\Omega = \Eto \setminus \bigcup_{-n\leq i\leq n,\  i\neq 0}\mathsf{int}\big(
\H(\vv_i,p_i) \big) .
\end{equation*}
\end{thm}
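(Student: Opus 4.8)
The plan is to prove Theorem~\ref{thm:disjointCPs} by a ping-pong (Klein--Maskit combination) argument adapted to the affine Lorentzian setting, the only genuinely delicate point being properness. Write $\H_i:=\H(\vv_i,p_i)$ and put $\g_{-i}:=\g_i^{-1}$, so the hypothesis reads $\g_i(\H_{-i})=\Eto\setminus\mathsf{int}(\H_i)$ for every $i$. Passing to complements, this is equivalent to $\g_i\big(\Eto\setminus\mathsf{int}(\H_{-i})\big)=\H_i$. The first thing I would record is the resulting family of \emph{ping-pong inclusions}: whenever $j\neq -i$, pairwise disjointness gives $\H_j\cap\H_{-i}=\emptyset$, hence $\H_j\subset\Eto\setminus\mathsf{int}(\H_{-i})$ and therefore
$$
\g_i(\H_j)\ \subset\ \mathsf{int}(\H_i).
$$
These strict inclusions drive the entire argument.

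Next I would deduce freeness. The $2n$ halfspaces being disjoint proper closed sets, $\mathsf{int}(\Omega)\neq\emptyset$; fix $x_0\in\mathsf{int}(\Omega)$. For a reduced word $w=\g_{j_1}\cdots\g_{j_k}$ (so $j_\ell\neq -j_{\ell+1}$), a downward induction on the inclusions above shows $w(x_0)\in\mathsf{int}(\H_{j_1})$, which is disjoint from $\Omega$; hence $w(x_0)\neq x_0$ and $w\neq 1$, so $\G$ is free on $\g_1,\dots,\g_n$. The same inclusions show that any nontrivial $\gamma\in\G$, conjugated to a cyclically reduced word $\g_{j_1}\cdots\g_{j_k}$ with $j_1\neq -j_k$, carries $\mathsf{int}(\H_{j_1})$ strictly into itself; combined with non-ellipticity of the generators this forces $\gamma$ to have no fixed point in $\Eto$, so the action is free.

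For the fundamental-domain statement I would show that the translates $\{w\,\Omega:w\in\G\}$ tile $\Eto$. Disjointness of interiors is immediate from the ping-pong: for $w\neq 1$ reduced, $w(\mathsf{int}\,\Omega)\subset\mathsf{int}(\H_{j_1})$ is disjoint from $\mathsf{int}\,\Omega$. For covering, given $q\in\Eto\setminus\Omega$ choose $i$ with $q\in\mathsf{int}(\H_i)$; since $\g_{-i}(\mathsf{int}\,\H_i)=\Eto\setminus\H_{-i}$, applying $\g_{-i}$ pushes $q$ out of $\H_{-i}$, and iterating this reduction expresses $q$ as $w(q')$ with $q'\in\Omega$. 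The termination of this reduction, like properness, rests on a single geometric input.

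That input, and the main obstacle, is properness. Since $\G$ acts by affine Lorentzian isometries rather than by isometries of a proper Riemannian metric, discreteness alone does \emph{not} yield proper discontinuity; moreover crooked halfspaces are unbounded, so their nesting cannot be controlled by Euclidean diameter. The key lemma I would establish is that, for every infinite reduced sequence $j_1,j_2,\dots$, the nested crooked halfspaces $\H_{j_1}\supset\g_{j_1}(\H_{j_2})\supset\g_{j_1}\g_{j_2}(\H_{j_3})\supset\cdots$ eventually leave every compact set; equivalently, each compact $K\subset\Eto$ meets only finitely many tiles $w\,\Omega$. Proving this forces one to use the disjointness hypothesis \emph{quantitatively}, through the explicit criterion for when one crooked halfspace contains another, and is exactly the technical heart supplied by the crooked-plane estimates of~\cite{Drumm_thesis,DrummGoldman_crooked}. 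Granting it, properness is immediate: only finitely many tiles meet $K$, and since the tiles cover $\Eto$ with disjoint interiors, $\{w\in\G:wK'\cap K\neq\emptyset\}$ is finite for all compact $K,K'$ --- the definition of a proper action --- and the same finiteness closes the covering argument.
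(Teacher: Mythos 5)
First, note that the paper does not prove Theorem~\ref{thm:disjointCPs}: it is quoted from \cite{Drumm_thesis} and placed in the appendix precisely so that its proof can be omitted, so there is no in-paper argument to compare yours against. Your ping-pong skeleton is the right one and is the standard Klein--Maskit structure: the inclusions $\g_i(\H_j)\subset\mathsf{int}(\H_i)$ for $j\neq -i$ do follow from the hypotheses and do give freeness of the group and disjointness of the interiors of the translates $w\Omega$ exactly as you describe.

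The genuine gap is that the one statement carrying all of the mathematical content --- your ``key lemma'' that every compact set meets only finitely many of the nested regions $\g_{j_1}\cdots\g_{j_k}\big(\H_{j_{k+1}}\big)$, equivalently that the reduction algorithm terminates and the tiling by translates of $\Omega$ is locally finite and covers $\Eto$ --- is asserted and then ``granted''. Everything else in your write-up is purely formal and would hold verbatim if the crooked halfspaces were replaced by any $2n$ pairwise disjoint closed regions permuted in the same combinatorial pattern; for such general regions the conclusion is false, since the nested intersection along an infinite reduced word need not exit compact sets and the translates of $\Omega$ need not cover $\Eto$. In the flat Lorentzian setting there is no invariant proper metric to appeal to, which is exactly why this theorem is nontrivial and why crooked planes were introduced: the proof in \cite{Drumm_thesis} must use the stem-and-wing geometry quantitatively to show that consecutive nested crooked halfspaces recede from any fixed basepoint. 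Your freeness-of-the-action step has the same dependency: ping-pong shows the group is free, but ruling out a fixed point of a cyclically reduced word $w$ comes down to $\bigcap_m w^m\big(\H_{j_1}\big)=\emptyset$, i.e.\ the same unproved lemma, and the appeal to ``non-ellipticity of the generators'' does not by itself exclude a fixed point of an affine map whose linear part has $1$ as an eigenvalue. So the proposal is an accurate road map that correctly locates the difficulty, but it does not constitute a proof: the theorem is exactly its missing lemma.
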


We therefore need a condition for disjointness of crooked planes.  
Start with a technical definition.

 \begin{defn}
\label{def:co} Spacelike vectors $\vv_1, \dots, \vv_n\in\rto$
are
{\em consistently oriented} if and only if, whenever $i\neq j$,
\begin{itemize}
\item $\ldot{\vv_i}{\vv_j}<0$;
\item $\ldot{\vv_i}{\xpm{\vv_j}}\leq 0$.
\end{itemize}
\end{defn}

\begin{lemma}\cite{CDG}
\label{lem:disjointasym}
Let $\vv_1, \vv_2\in\rto$ be non-parallel, consistently oriented 
vectors such that $\xm{\vv_1}=\xp{\vv_2}$. Suppose
\begin{equation*}
p_i=a_i\xm{\vv_i} -b_i \xp{\vv_i},
\end{equation*}
where $a_i,b_i>0$ for $i=1,2$.
Then $\CP(\vv_1,p_1)$ and $\CP(\vv_2,p_2)$ are disjoint.
\end{lemma}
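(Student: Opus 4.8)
The plan is to recast disjointness as a single crooked-halfspace containment. By the remark that $\H(\vv,p)$ and $\H(-\vv,p)$ are the two distinct components of $\Eto\setminus\CP(\vv,p)$, it suffices to place all of $\CP(\vv_2,p_2)$ inside one open component of the complement of $\CP(\vv_1,p_1)$. Write $\vw:=\xm{\vv_1}=\xp{\vv_2}$ for the common lightlike direction, and record the sign data I will use throughout: consistent orientation gives $\vv_1\cdot\vv_2<0$ and $\vv_i\cdot\xpm{\vv_j}\le 0$; orthogonality gives $\vv_i\cdot\xpm{\vv_i}=0$; and, since the eigenvectors $\xpm{\vv_i}$ are future-pointing lightlike, any two of the three vectors $\xp{\vv_1},\vw,\xm{\vv_2}$ pair to a strictly negative number, because two future-pointing null vectors have non-positive inner product with equality only when parallel, and non-parallelism of $\vv_1,\vv_2$ rules out $\xp{\vv_1}\parallel\xm{\vv_2}$ (else $\vv_1^\perp=\vv_2^\perp$).

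First I would pin down the correct component by testing the vertex $p_2$. Substituting $p_i=a_i\xm{\vv_i}-b_i\xp{\vv_i}$ and using the relations above,
\[
(p_2-p_1)\cdot\vv_1 \;=\; a_2\,(\vv_1\cdot\xm{\vv_2})\;\le\;0,
\qquad
(p_2-p_1)\cdot\vw \;=\; b_1\,(\xp{\vv_1}\cdot\vw)+a_2\,(\xm{\vv_2}\cdot\vw)\;<\;0,
\]
the strict sign coming from $a_2,b_1>0$ together with the two strictly negative future-null products. Since the defining inequalities of $\H(-\vv_1,p_1)$ ask, on the side $(q-p_1)\cdot\vv_1\le 0$, for $(q-p_1)\cdot\vw\le 0$, this puts $p_2$ in $\mathrm{int}\,\H(-\vv_1,p_1)$, so the goal becomes $\CP(\vv_2,p_2)\subset\mathrm{int}\,\H(-\vv_1,p_1)$. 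I would then verify this containment piece by piece. A general point is $q=p_2+u$, with $u$ either in a wing $\R\xpm{\vv_2}\pm\R_+\vv_2$ or in the stem, the causal locus of $\vv_2^\perp=\mathrm{span}(\vw,\xm{\vv_2})$, which (as $\vw\cdot\vw=\xm{\vv_2}\cdot\xm{\vv_2}=0$ and $\vw\cdot\xm{\vv_2}<0$) is exactly $\{\sigma\vw+\tau\xm{\vv_2}:\sigma\tau\ge0\}$. For each $u$ I would compute $(q-p_1)\cdot\vv_1$ to decide which defining inequality applies, then test the corresponding quantity $(q-p_1)\cdot\vw$ or $(q-p_1)\cdot\xp{\vv_1}$.

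The mechanism that makes these checks work is that the constraint selecting the side feeds back into the test. For instance, on the stem one finds $(q-p_1)\cdot\vv_1=(a_2+\tau)(\vv_1\cdot\xm{\vv_2})$, so lying on the $\xp{\vv_1}$-side pins $a_2+\tau\le 0$ (and then $\sigma\le 0$); substituting into $(q-p_1)\cdot\xp{\vv_1}=(a_2+\tau)(\xm{\vv_2}\cdot\xp{\vv_1})+(\sigma-a_1-b_2)(\vw\cdot\xp{\vv_1})$ turns it into a sum of two manifestly nonnegative terms. The wing $p_2+\R\vw+\R_+\vv_2$ is handled directly: the $\vw$-displacement contributes nothing ($\vw\cdot\vw=0$) and the $\vv_2$-displacement contributes $t\,(\vv_2\cdot\vw)\le 0$, so $(q-p_1)\cdot\vw$ stays strictly negative, dominated by the constant from the vertex test.

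The main obstacle is concentrated along the shared lightlike direction $\vw$, common to a wing of each crooked plane and to the adjacent stem cone, where consistent orientation alone cannot separate the planes; there the entire margin is the strictly negative constant $b_1(\xp{\vv_1}\cdot\vw)+a_2(\xm{\vv_2}\cdot\vw)$, which is precisely why the strict hypotheses $a_i,b_i>0$ are indispensable — letting the relevant offsets vanish drives this constant to $0$ and produces exactly the kissing configuration flagged in the construction of the quadrants $\Q_0,\Q_A,\Q_X$. The one check that goes beyond sign-counting is the far part of the wing $p_2+\R\xm{\vv_2}-\R_+\vv_2$ that crosses to the $\xp{\vv_1}$-side: there the side constraint forces the $\vv_2$-coefficient to be large, and one must verify it dominates the single term of indefinite sign; I expect this to be the genuinely analytic step, following from the reverse Cauchy--Schwarz inequality for the future-pointing causal vectors involved. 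Once every piece is disposed of, $\CP(\vv_2,p_2)\subset\mathrm{int}\,\H(-\vv_1,p_1)$, whence $\CP(\vv_1,p_1)\cap\CP(\vv_2,p_2)=\emptyset$.
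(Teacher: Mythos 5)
Your strategy is sound and, after checking the details, it works; but note that this paper does not prove the lemma at all — it is cited from \cite{CDG}, where disjointness is obtained by invoking the Drumm--Goldman disjointness criterion (phrased via Lorentzian cross-products of the direction vectors and their null frames) and verifying the resulting inequality for the particular vertices $p_i=a_i\xm{\vv_i}-b_i\xp{\vv_i}$. Your route is the more elementary one underlying that criterion: you show $\CP(\vv_2,p_2)\subset\mathsf{int}\,\H(-\vv_1,p_1)$ directly by a case analysis over the stem and the two wings, which buys self-containedness at the cost of bookkeeping. Your sign computations are correct, including the important observation that non-parallelism forces $\ldot{\vv_1}{\xm{\vv_2}}<0$ and $\ldot{\vv_2}{\xp{\vv_1}}<0$ strictly. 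The one step you leave as an expectation — dominating the indefinite term $(a_2+s)\,\ldot{\xm{\vv_2}}{\xp{\vv_1}}$ on the far part of the wing $p_2+\R\xm{\vv_2}-\R_+\vv_2$ — does go through, but it is not a consequence of reverse Cauchy--Schwarz for causal vectors, since the spacelike vectors enter essentially. What you need is
$\ldot{\vv_1}{\xm{\vv_2}}\cdot\ldot{\vv_2}{\xp{\vv_1}}\;\ge\;\ldot{\vv_1}{\vv_2}\cdot\ldot{\xp{\vv_1}}{\xm{\vv_2}}$,
which is invariant under rescaling each of the four vectors separately, hence may be checked with $\vv_1,\vv_2$ unit and $\xm{\vv_1}=\xp{\vv_2}$ normalized in standard coordinates; there one finds the left side equals exactly twice the right side, so the inequality holds with room to spare. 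With that identity supplied, every case yields a strict inequality (the interior of the halfspace), consistent with your correct remark that letting the coefficients $a_i,b_i$ degenerate to $0$ collapses the margin and produces precisely the kissing configurations handled separately by Lemma~\ref{lem:kissing1}.
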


Finally, while Theorem~\ref{thm:disjointCPs} requires pairwise disjoint crooked planes, some of the configurations we encounter contain crooked planes that share a vertex, a line or even a wing.  Nonetheless, these crooked planes can be separated in cases of interest, as described in the following lemma. 

\begin{lemma}[Kissing Lemma, \cite{CDG}]
\label{lem:kissing1}
Let $\vu_1,\vu_2,\vv_1,\vv_2\in\rto$ be  pairwise consistently oriented vectors
and
suppose
$q,p_1,p_2\in\Eto$ satisfy:
\begin{align*}
\CP(\vv_1,p_1)\,\cap\,\CP(\vv_2,p_2) &=\;\emptyset\\
\CP(\vv_{1},p_{1})\,\cap\,\CP(\vu_1,q) &=\;\CP(\vv_{1},p_{1})\,\cap\,\CP(\vu_2,q) =\;\emptyset \\
\CP(\vv_{2},p_{2})\,\cap\,\CP(\vu_1,q) &=\;\CP(\vv_{1},p_{1})\,\cap\,\CP(\vu_2,q) =\;\emptyset .
\end{align*}
Let $\g_1,~\g_2\in\Isom(\Eto)$ such that $\g_i(\mathcal{H}(\vu_i,q))=\mathcal{H}(-\vv_i,p_i)$.
Then there exist $q_1,~q_2\in\Eto$ such that the following crooked planes are pairwise disjoint:
\begin{equation*}
\CP(\vu_1,q_1),~\CP(\vu_2,q_2),~\CP(\vv_1,\g_1(q_1)),~\CP(\vv_2,\g_2(q_2)).
\end{equation*}
\end{lemma}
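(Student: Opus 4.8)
The plan is to treat the conclusion as a small perturbation of the \emph{kissing} configuration in which $q_1=q_2=q$. First I would record the structural consequence of the hypothesis $\g_i(\H(\vu_i,q))=\H(-\vv_i,p_i)$: since $\g_i$ is a Lorentzian isometry and the boundary of $\H(-\vv_i,p_i)$ is $\CP(-\vv_i,p_i)=\CP(\vv_i,p_i)$, comparing boundaries gives $\g_i(\CP(\vu_i,q))=\CP(\vv_i,p_i)$, hence $\g_i(q)=p_i$ and $\LL(\g_i)\vu_i=\pm\vv_i$. Consequently, displacing the common vertex $q$ to $q_1$ (respectively $q_2$) simultaneously relocates $\CP(\vu_1,\cdot)$ and its $\g_1$-image $\CP(\vv_1,\g_1(\cdot))$ (respectively $\CP(\vu_2,\cdot)$ and $\CP(\vv_2,\g_2(\cdot))$). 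Thus the problem reduces to choosing a single pair $(q_1,q_2)$ near $q$.

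Next I would enumerate the six pairs among the four target crooked planes and evaluate them at $q_1=q_2=q$. Five of them are exactly the pairs assumed disjoint in the hypothesis: the pair $\CP(\vv_1,p_1)$ against $\CP(\vv_2,p_2)$, together with the four cross pairs $\CP(\vu_i,q)$ against $\CP(\vv_j,p_j)$ (each a crooked plane against a crooked-plane image). Only the sixth pair, $\CP(\vu_1,q)$ against $\CP(\vu_2,q)$, is not assumed disjoint; these share the vertex $q$ and \emph{kiss}. So it suffices to produce $q_1,q_2$ near $q$ that separate this one pair while preserving disjointness of the other five.

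To separate the kissing pair I would invoke Lemma~\ref{lem:disjointasym}. Because $\vu_1,\vu_2$ are consistently oriented and the planes kiss, they share a null direction, say $\xm{\vu_1}=\xp{\vu_2}$; working relative to the vertex $q$, set $q_1=q+a_1\xm{\vu_1}-b_1\xp{\vu_1}$ and $q_2=q+a_2\xm{\vu_2}-b_2\xp{\vu_2}$ with small $a_i,b_i>0$. Lemma~\ref{lem:disjointasym} then makes $\CP(\vu_1,q_1)$ and $\CP(\vu_2,q_2)$ disjoint. For the remaining five pairs I would argue that each occurs in consistently oriented position, where the relevant separation criterion is expressed by \emph{strict} inequalities on the vertices, so that disjointness is an open condition; choosing the coefficients $a_i,b_i$ small enough keeps all five pairs disjoint. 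Finally, $\g_1(q_1)$ and $\g_2(q_2)$ supply the vertices of the two $\vv$-planes, and all four crooked planes are then pairwise disjoint.

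The main obstacle is the openness step. Crooked planes are noncompact and can be asymptotic, so disjointness of two of them is not automatically preserved under vertex perturbation; one must use that the five disjoint pairs occur in consistently oriented position and that the separation criterion (Lemma~\ref{lem:disjointasym} and its variants) is governed by strict inequalities, hence stable. The delicate point is that a \emph{single} displacement $q\mapsto q_1$ must simultaneously keep $\CP(\vu_1,q_1)$ disjoint from its own image $\CP(\vv_1,\g_1(q_1))$ and from $\CP(\vv_2,\g_2(q_2))$, so the smallness of $a_i,b_i$ has to be chosen uniformly across all five constraints before fixing $(q_1,q_2)$.
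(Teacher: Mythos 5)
The paper does not actually prove this lemma --- it imports it from \cite{CDG} --- so I am comparing your argument against the proof given there. Your reduction is sound as far as it goes: from $\g_i(\H(\vu_i,q))=\H(-\vv_i,p_i)$ you correctly extract $\g_i(\CP(\vu_i,q))=\CP(\vv_i,p_i)$ and $\g_i(q)=p_i$, so that at $q_1=q_2=q$ five of the six pairs are disjoint by hypothesis and only $\CP(\vu_1,q)$ versus $\CP(\vu_2,q)$ needs to be separated. The problem is the step you yourself flag as the main obstacle: you assert that the five disjoint pairs stay disjoint under small perturbation because ``the separation criterion is expressed by strict inequalities,'' citing Lemma~\ref{lem:disjointasym} and its variants. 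That lemma is only a \emph{sufficient} condition, and it requires the vertices to sit in specific quadrants $a\xm{\vv}-b\xp{\vv}$; the hypothesis of the Kissing Lemma hands you bare disjointness with no such normal form, so nothing you have quoted implies that disjointness of two noncompact, mutually asymptotic crooked planes is an open condition in the vertices. Openness is in fact true, but only via the full necessary-and-sufficient disjointness criterion of \cite{DrummGoldman_crooked} (a strict inequality in $p_2-p_1$ for fixed consistently oriented directions), which is a substantial input you would need to invoke explicitly. A secondary gap: ``consistently oriented and kissing implies $\xm{\vu_1}=\xp{\vu_2}$'' does not follow from the stated hypotheses (two crooked planes with a common vertex always meet at that vertex even when the underlying geodesics are ultraparallel and share no null direction); it happens to hold in the configuration where the lemma is applied, but your proof only covers that case.

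The proof in \cite{CDG} avoids both issues by replacing perturbation-plus-openness with a nesting argument. One displaces $q_i$ into the quadrant $q+\Quad{\xm{\vu_i},-\xp{\vu_i}}$; the point is that such a displacement pushes $\CP(\vu_i,q_i)$ into the \emph{open} crooked halfspace bounded by $\CP(\vu_i,q)$, hence $\CP(\vv_i,\g_i(q_i))=\g_i(\CP(\vu_i,q_i))$ lands in $\opn{int}\H(-\vv_i,p_i)$. Since the four open halfspaces $\H(\vu_1,q)$, $\H(\vu_2,q)$, $\H(-\vv_1,p_1)$, $\H(-\vv_2,p_2)$ are pairwise disjoint (their boundaries meet at worst along shared wings), the four displaced crooked planes are pairwise disjoint with no smallness or limiting argument at all. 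This buys more than your version: the conclusion holds for \emph{every} choice of $q_i$ in the full quadrant, not just for sufficiently small displacements, which is exactly the uniformity that Theorem~\ref{thm:tame} and the parametrization \eqref{eq:vertexdef} rely on. If you want to salvage your route, state and use the Drumm--Goldman disjointness criterion as the source of openness; otherwise the containment argument is both shorter and stronger.
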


\end{document}